\numberwithin{equation}{section}
\newcommand{\fnum}{\mathbb{F}}
\newtheorem{theorem}{Theorem}[section]
\newtheorem{lemma}{Lemma}[section]
\newtheorem{corollary}{Corollary}[section]
\newtheorem{remarks}{Remark}[section]
\newtheoremstyle{MyNonumberplain}%
  {\item[\theorem@headerfont\hskip\labelsep ##1\theorem@separator]}%
  {\item[\theorem@headerfont\hskip\labelsep ##3\theorem@separator]}
\theoremstyle{MyNonumberplain}
\newtheorem{proof}{Proof}
\begin{document}

\title{Simultaneous Diophantine Approximation in Function Fields}

\date{}

\author{Zhiyong Zheng\footnote{This work was partially supported by the ``973'' project 2013CB834205 of P.R. China.}}
\affil{School of Mathematics and Systems Science, \authorcr
Beihang University, Beijing, P.R. China \authorcr
zhengzhiyong@buaa.edu.cn \vspace{4mm}}
%\footnote{This work was partially supported by the ``973'' project 2013CB834205.}

\maketitle
\begin{abstract}
There are abundant results on Diophantine approximation over fields of positive characteristic (see the survey papers \cite{ref:9,ref:n4}), but there is very little information about simultaneous approximation. In this paper, we develop a technique of ``geometry of numbers'' in positive characteristic, so that we may generalize some of the classical results on simultaneous approximation to the case of function fields. More precisely, we approximate a finite set of Laurent series by rational functions with a common denominator. In particular, the lower bound results we obtain may be regarded as a high dimensional version of the Liouville--Mahler Theorem on algebraic functions of degree $n$. As an application, we investigate binary quadratic forms, and determine the exact approximation constant of a quadratic algebraic function. Finally, we give two examples using continued fractions.
\end{abstract}

\noindent{\small {\bf 2010 Mathematics Subject Classification: }11J61, 11J70, 11J83}

\noindent{\small {\bf Key words: }Simultaneous Approximation, Laurent Series Field, Haar Measure}

\section{Introduction and Results Statement}
\label{sec:intro}

Let $\fnum_q$ be a finite field with $q$ elements of characteristic $p$, $K = \fnum_q[T]$ be the polynomial ring, $k = \fnum_q(T)$ be the rational function field, and $k_{\infty} = \fnum_q((\frac{1}{T}))$ be the formal Laurent series field. Let $v$ be the normalized exponent valuation with $v(\frac{1}{T}) = 1$, so that $v(\alpha)$ takes on integer values and $v(0) = \infty$. If $\alpha$ is an element in $k_{\infty}$, then $\alpha$ can be uniquely expressed as a Laurent series as follows
\begin{equation}
  \label{eq:1-1}
  \alpha = \sum_{i=n}^{+\infty} a_i\left(\frac{1}{T}\right)^i, \quad n\in \mathbb{Z}, a_i\in \fnum_q, \mbox{ and }a_n\neq 0,
\end{equation}
where $v(\alpha) = n$. We define the square bracket function $[\alpha]$ by
\begin{equation}
  \label{eq:1-2}
  [\alpha] = \sum_{i=n}^0a_i\left(\frac{1}{T}\right)^i,\mbox{ if }n\leq 0, \mbox{ and }[\alpha]=0, \mbox{ if }n>0,
\end{equation}
which is called the ``integral part'' of $\alpha$ as usual. It is easily seen that $[\alpha]\in K$, $[\alpha+\beta] = [\alpha]+[\beta]$, $[a\alpha] = a[\alpha]$ for all $a\in \fnum_q^*$, and $v(\alpha - [\alpha])\geq 1$. In fact, there is a unique polynomial $A = [\alpha]$, such that $v(\alpha - A)\geq 1$. We also have
\begin{equation}
  \label{eq:1-3}
  v(\alpha) = v([\alpha]), \mbox{ if } v(\alpha) \leq 0.
\end{equation}
The absolute value functions $|\alpha|$ and $||\alpha||$ are given by
\begin{equation}
  \label{eq:1-4}
  |\alpha| = q^{-v(\alpha)}, \mbox{ and }||\alpha|| = |\alpha-[\alpha]|, \alpha\in k_{\infty}.
\end{equation}
It is worth keeping in mind that $|0|=0$, $|a|=1$ for all $a\in \fnum_q^*$, $|\alpha\beta| = |\alpha|\cdot |\beta|$ and in particular, if $|\alpha| < |\beta|$, then $|\alpha|\leq \frac{1}{q}|\beta|$. For the double absolute function, we have $||\alpha+A|| = ||\alpha||$ for all $A\in K$, $||a\alpha|| = ||\alpha||$ for all $a\in \fnum_q^*$, $0\leq ||\alpha|| \leq \frac{1}{q}$, for all $\alpha\in k_{\infty}$, and $||\alpha||=0$ if and only if $\alpha\in K$. In particular we have
\begin{equation}
  \label{eq:1-5}
  ||\alpha+\beta|| \leq \max\{||\alpha||, ||\beta||\}, \mbox{ and } ||\alpha|| = \inf\limits_{A\in K}|\alpha-A|.
\end{equation}
Thus, $||\alpha||$ is the smallest distance from $\alpha$ to any element of $K$, and $[\alpha]$ is the nearest polynomial to $\alpha$.

The story of Diophantine approximation in $k_{\infty}$ goes back to E. Artin, who first introduced continued fractions over $k_{\infty}$ (see \cite{ref:2}). Continued fractions have been the primary tool of investigation in the positive characteristic case up to the current day. In 1949, Mahler \cite{ref:11} proved the following analogue of Liouville's theorem: For any element $\alpha \in k_{\infty}$, algebraic  of degree $n>1$ over $k$, there is a constant $c=c(\alpha)>0$, such that
\begin{equation}
  \label{eq:1-6}
  ||Q\alpha|| \geq c\cdot |Q|^{1-n}
\end{equation}
for all nonzero $Q\in K$. Moreover, as Mahler noted in \cite{ref:11}, the exponent $1-n$ in \eqref{eq:1-6} is best possible. Indeed, for his famous  example $\alpha := T^{-1} + T^{-p} + \cdots + T^{-p^h} + \cdots $ in $k_{\infty}$, we have that $\alpha$ is degree $p$ over $k$, and there are infinitely many polynomials $Q_n$, such that $||Q_n\alpha|| = |Q_n|^{1-p}$. In particular, this means that Thue's theorem does not hold in $k_{\infty}$, needless to say Roth's remarkable theorem. Later, a few mathematicians try to establish an analogue of Thue's theorem under certain conditions. For details, we refer the reader to \cite{ref:10,ref:12}. For a general background to material on Diophantine approximation in characteristic zero and in positive characteristic, the reader should consult \cite{ref:n1,ref:n2,ref:6,ref:n3}.

Certainly, there are fruitful results on Diophantine approximation in positive characteristic (see the survey papers \cite{ref:9,ref:n4}), but there is very little information about simultaneous approximation (see \cite{ref:nn2,ref:nn3,ref:n7,ref:n5}). The first significant progress on high dimensional approximation is due to K. Mahler \cite{ref:n7}, who established an analogue to Minkowski's second theorem on the geometry of numbers in positive characteristic (see also Chapter~9 of \cite{ref:n2}). For any convex body defined by a distance function with non-Archimedean valuation, Mahler showed \cite{ref:n7} that the product of the successive minima of the convex body is equal to a certain positive constant, and the constant was considered as the volume of the given convex body by Mahler. In \cite{ref:n1}, E. Bombieri and W. Gubler proved Minkowski's second theorem over the adeles (see Theorem~C.2.11 of \cite{ref:n1}), which may be regarded as an adelic version of the above assertion of Mahler. However, the volume defined by Mahler (see (20) of \cite{ref:n7}) is lacking an explicit bound for various types of distance functions, and thus there still exists a gap between Mahler's theorem and simultaneous Diophantine approximation in general. The key point of connection is Lemma~\ref{lem:2-4} below, known as Minkowski's linear forms theorem (see Theorem~III of Appendix B of \cite{ref:5}). In \cite{ref:n5}, deMathan only dealt with a pair of algebraic functions approximated by rational functions, in other words, he only obtained a special two dimensional result in the homogeneous case.

In this paper, we investigate simultaneous approximation in $k_{\infty}$ by using the Haar measure.  Our main results are the following.

\begin{theorem}\label{thm:1-1}
  Suppose that $\theta_1, \theta_2, \ldots, \theta_n$ are $n$ elements in $k_{\infty}$, and $H\in k_{\infty}$ with $|H|\geq 1$. Then there is a polynomial $x\in K$, such that
  \begin{equation}
    \label{eq:1-7}
    \max\limits_{1\leq i \leq n}||x\theta_i|| \leq (q|H|)^{-\frac{1}{n}}, \mbox{ and }1\leq |x| \leq |H|.
  \end{equation}
\end{theorem}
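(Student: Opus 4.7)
The natural approach is to reduce this simultaneous Dirichlet statement to the function-field Minkowski linear forms theorem (Lemma~\ref{lem:2-4}) via an appropriate choice of $n+1$ linear forms in $n+1$ integer variables.

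On the lattice $K^{n+1}$ with coordinates $(x_0, x_1, \ldots, x_n)$, I would set up the forms
\begin{equation*}
L_0 = x_0, \qquad L_i = x_0 \theta_i - x_i \quad (1 \le i \le n),
\end{equation*}
whose coefficient matrix is lower-triangular with unit diagonal, so its determinant has absolute value $1$. With the bounds $c_0 = |H|$ and $c_1 = \cdots = c_n = (q|H|)^{-1/n}$, the product $c_0 \cdot c_1 \cdots c_n = |H| \cdot (q|H|)^{-1} = q^{-1}$ hits exactly the threshold needed to invoke Lemma~\ref{lem:2-4}, producing a nonzero vector $(x_0, x_1, \ldots, x_n) \in K^{n+1}$ with $|L_i| \le c_i$ for every $i$.

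After that, the finish is routine. To rule out $x_0 = 0$: in that case every $L_i = -x_i$ would force $|x_i| \le (q|H|)^{-1/n} < 1$, hence $x_i = 0$ because $K$ contains no nonzero element of absolute value less than $1$, contradicting the nonzeroness of the vector. So $x_0 \in K \setminus \{0\}$, giving $|x_0| \ge 1$, while $|x_0| = |L_0| \le |H|$ supplies the upper bound. The infimum description in \eqref{eq:1-5} then yields
\begin{equation*}
||x_0\theta_i|| \;=\; \inf_{A \in K} |x_0 \theta_i - A| \;\le\; |x_0 \theta_i - x_i| \;=\; |L_i| \;\le\; (q|H|)^{-1/n}
\end{equation*}
for each $i$, and setting $x := x_0$ completes the proof.

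The only point that really needs care is the interface with Lemma~\ref{lem:2-4}: because $|L_i|$ takes values only in $q^{\mathbb{Z}} \cup \{0\}$ while $(q|H|)^{-1/n}$ is generally not a power of $q$, one must check that the lemma can be applied with these real-valued bounds (or, equivalently, with the $c_i$ rounded down to $q^{\mathbb{Z}}$). The extra factor of $q$ inside $(q|H|)^{-1/n}$ is precisely the non-Archimedean ``slack'' that makes $c_0 \cdots c_n = q^{-1}$, matching the covolume of $K$ inside $k_{\infty}$ (the fundamental domain $T^{-1}\fnum_q[[T^{-1}]]$ has Haar measure $q^{-1}$); this is why the bound comes out as $(q|H|)^{-1/n}$ rather than $|H|^{-1/n}$. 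A completely parallel proof by Dirichlet-style pigeonhole, partitioning the $|H|\cdot q$ polynomials $x\in K$ with $|x|\le |H|$ according to the truncated fractional parts of $x\theta_1,\ldots,x\theta_n$, would give the same conclusion and, I expect, shadow the author's argument up to normalization.
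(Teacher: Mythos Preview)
Your proposal is correct and essentially identical to the paper's own proof: both invoke Lemma~\ref{lem:2-4} on the same $(n+1)$-form system $L_0=x_0$, $L_i=\theta_i x_0\pm x_i$, with the same integer thresholds (your implicit rounding $r_i=\lceil(1+\deg H)/n\rceil$ equals the paper's $[\deg H/n]+1$, and $r_0=-\deg H$), and both finish by excluding $x_0=0$ in the same way. The only cosmetic difference is that the paper works throughout in valuation language rather than absolute values; your remark about the $q^{\mathbb Z}$ rounding is exactly the translation step, and the paper handles it by the chain $\|x\theta_i\|^n\le q^{-n([\deg H/n]+1)}<|H|^{-1}\Rightarrow \|x\theta_i\|^n\le (q|H|)^{-1}$. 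Your closing speculation about a direct pigeonhole argument is not what the paper does---it relies on the Haar-measure route through Lemmas~\ref{lem:2-1}--\ref{lem:2-4}.
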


The above theorem is a special case of Theorem~\ref{thm:2-2} below, in fact we have proved there a stronger, namely a multiplicative statement. As a direct consequence (see Corollary~\ref{cor:2-2} below), if $\theta_1, \theta_2, \ldots, \theta_n$ are $n$ elements in $k_{\infty}$, then there are infinitely many polynomials $x\in K$, such that
\begin{equation}
  \label{eq:1-8}
  |x|^{\frac{1}{n}}\max\limits_{1\leq i \leq n}||x\theta_i||\leq q^{-\frac{1}{n}}.
\end{equation}

The exponent $-\frac{1}{n}$ in \eqref{eq:1-7} is best possible, since we also have the following lower bound estimates.

\begin{theorem}\label{thm:1-2}
  Suppose that $\theta_1, \theta_2, \ldots, \theta_n$ are $n$ algebraic elements in $k_{\infty}$ such that $k(\theta_1, \dots, \theta_n)$ is of degree $n+1$ over $k$, and $\{1, \theta_1, \ldots, \theta_n\}$ are linearly independent over $k$. Then there is a constant $\gamma > 0$ (depending only on $\theta_1, \theta_2, \ldots,\theta_n$) such that
  \begin{equation}
    \label{eq:1-9}
    \max\limits_{1\leq i \leq n}||x\theta_i|| \geq \gamma|x|^{-\frac{1}{n}}
  \end{equation}
for all nonzero $x\in K$. Equivalently, there is a constant $\gamma_1 >0$, such that
\begin{equation}
  \label{eq:1-10}
  ||x_1\theta_1 + x_2\theta_2 + \cdots + x_n\theta_n|| \geq \gamma_1(\max\limits_{1\leq i \leq n}|x_i|)^{-n}
\end{equation}
for all nonzero $n$-tuples $x=(x_1, x_2, \ldots, x_n)\in K^n$.
\end{theorem}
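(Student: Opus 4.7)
The plan is to prove the equivalent formulation \eqref{eq:1-10} directly by a Liouville--Mahler type norm argument inside $L := k(\theta_1,\ldots,\theta_n)$, and then deduce \eqref{eq:1-9} from \eqref{eq:1-10} via Minkowski's linear forms theorem (Lemma~\ref{lem:2-4}). Attacking \eqref{eq:1-10} first is natural because $x_1\theta_1 + \cdots + x_n\theta_n$ is already a single element of the degree-$(n+1)$ field $L$, whereas \eqref{eq:1-9} is the dual statement involving $n$ separate approximations $x\theta_i$.

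For \eqref{eq:1-10}, I would fix the $k$-embeddings $\sigma_0 = \mathrm{id}, \sigma_1, \ldots, \sigma_n$ of $L$ into an algebraic closure of $k_\infty$, on which $|\cdot|$ extends uniquely. Given a nonzero tuple $(x_1,\ldots,x_n) \in K^n$, set $\alpha = \sum x_i\theta_i$, $P = [\alpha]$, and $\beta = \alpha - P$; the linear independence of $\{1,\theta_1,\ldots,\theta_n\}$ over $k$ forces $\beta \neq 0$ and $|\beta| = \|\alpha\|$. Choose a nonzero $D \in K$ such that each $D\theta_i$ lies in the integral closure $\mathcal{O}_L$ of $K$ in $L$; then $D\beta \in \mathcal{O}_L \setminus \{0\}$, so $N_{L/k}(D\beta)$ is a nonzero element of $K$ and $|N_{L/k}(D\beta)| \geq 1$. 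Expanding $N_{L/k}(D\beta) = D^{n+1}\prod_{j=0}^n \sigma_j(\beta)$ and estimating $|\sigma_j(\beta)| \leq C \max_i|x_i|$ for $j \geq 1$ (using that $\max_{i,j}|\sigma_j(\theta_i)|$ is a finite constant and that $|P| \leq |\alpha|$), we solve for $|\beta|$ to obtain \eqref{eq:1-10} with $\gamma_1$ of the form $|D|^{-(n+1)}C^{-n}$.

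For \eqref{eq:1-9}, given a nonzero $x \in K$, let $p_i = [x\theta_i]$ and $\delta_i = x\theta_i - p_i$, so $\delta := \max_i|\delta_i| = \max_i\|x\theta_i\|$. I would apply Lemma~\ref{lem:2-4} to the $n+1$ linear forms on $K^{n+1}$
\[
L_0(y) = y_1 p_1 + \cdots + y_n p_n - y_0 x, \qquad L_i(y) = y_i \quad (1 \leq i \leq n),
\]
whose coefficient matrix has determinant $\pm x$. Choosing $c_0 = q^{-1}$ and $c_1 = \cdots = c_n = Y$ with $Y$ of order $|x|^{1/n}$, the lemma produces a nonzero $y \in K^{n+1}$ with $|L_i(y)| \leq c_i$; since $L_0(y) \in K$ and $|L_0(y)| < 1$, necessarily $L_0(y) = 0$, which in turn forces $(y_1,\ldots,y_n) \neq 0$ (else $y_0 x = 0$ gives $y_0 = 0$). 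From $L_0(y) = 0$ and $x\theta_i = p_i + \delta_i$ one computes $\sum y_i\theta_i = y_0 + \sum y_i\delta_i/x$, whence $\|\sum y_i\theta_i\| \leq Y\delta/|x|$. Combining with \eqref{eq:1-10} applied to $(y_1,\ldots,y_n)$ yields $\gamma_1 Y^{-n} \leq Y\delta/|x|$, and the choice $Y \sim |x|^{1/n}$ rearranges to \eqref{eq:1-9}.

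The main obstacle is the uniform conjugate bound $\max_{i,j}|\sigma_j(\theta_i)| < \infty$, which rests on the unique extension of $|\cdot|$ from $k_\infty$ to its algebraic closure combined with algebraicity of each $\theta_i$. A secondary subtlety is possible inseparability of $L/k$ when $\mathrm{char}\, k$ divides $n+1$; this only strengthens the estimate, since $N_{L/k}$ then equals a $[L:k]_i$-th power of a product over the $s \leq n+1$ distinct separable embeddings, and the resulting exponent $-(s-1) \geq -n$ implies the claimed exponent $-n$ a fortiori.
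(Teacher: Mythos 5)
Your proof is correct, and for the core inequality \eqref{eq:1-10} your argument is essentially the one in the paper: write $\beta = \sum x_i\theta_i - [\sum x_i\theta_i]$, observe that $\beta \neq 0$ by the linear independence hypothesis, clear denominators with a fixed $D \in K$ to make $D\beta$ integral over $K$, invoke $|N_{L/k}(D\beta)| \geq 1$, and bound the remaining conjugate factors linearly in $\max_i|x_i|$. The paper's proof of \eqref{eq:1-10} is the same calculation phrased slightly differently: it works with the element $\alpha = A(w+\sum u_i\theta_i)$ and splits its degree as $m = p^t m_0$ into separable and inseparable parts, handling $m_0 = 1$ and $m_0 > 1$ separately, whereas you package this via the full field norm $N_{L/k}$ and then remark that inseparability of $L/k$ only improves the exponent. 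Where you genuinely depart from the paper is in deducing \eqref{eq:1-9} from \eqref{eq:1-10}. The paper invokes Corollary~\ref{cor:3-1}, its general transference principle (built in Section~\ref{sec:trans} from Lemma~\ref{lem:3-1} and Theorem~\ref{thm:3-1}), which establishes the two-sided equivalence between the dual systems of linear forms. You instead give a direct, one-directional construction: apply Lemma~\ref{lem:2-4} to the $n+1$ forms $L_0(y) = y_1[x\theta_1] + \cdots + y_n[x\theta_n] - y_0 x$ and $L_i(y) = y_i$, whose coefficient matrix has determinant $\pm x$; the resulting nonzero integral point forces $L_0(y)=0$, which produces a tuple $(y_1,\ldots,y_n)\neq 0$ with $\|\sum y_i\theta_i\| \lesssim Y\max_i\|x\theta_i\|/|x|$, and feeding this into \eqref{eq:1-10} with $Y$ of order $|x|^{1/n}$ gives \eqref{eq:1-9}. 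This is a clean, self-contained instantiation of exactly the transference step needed here, avoiding the abstract machinery of Section~\ref{sec:trans} at no cost beyond the loss of generality. One small loose end to tidy: $|x|^{1/n}$ is generally not an integral power of $q$, so $Y$ must be taken to be $q^{\lceil \deg x / n\rceil}$ (or similar), which inflates the final constant $\gamma$ by a bounded power of $q$ but does not affect the exponent.
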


We may regard Theorem~\ref{thm:1-2} as a high dimensional version of the above theorem of Mahler; see \eqref{eq:1-6}. To explain this point of view, let $\alpha$ be a fixed algebraic element in $k_{\infty}$ of degree $n+1$, we put $\theta_i = \alpha^i~(1\leq i \leq n)$ in the above theorem, then by Theorem~\ref{thm:1-2}, it follows that there is a constant $\gamma > 0$ (depending only on $\alpha$) such that
\begin{equation*}
  \max_{1\leq i \leq n} ||x\alpha^i || \geq \gamma |x|^{-\frac{1}{n}}
\end{equation*}
for all nonzero $x\in K$. Clearly we get a more generalized result than the original result of Mahler (see \eqref{eq:1-6}). For more discussions on this special case, the reader should be referred to \cite[Section~2]{ref:nn1}.

Next, as an application, we focus on  quadratic algebraic functions, and seek to determine the approximation constant. Let $p>2$, and
$$f(x, y) = \alpha x^2 + \beta xy + \gamma y ^2, \quad \alpha, \beta, \gamma \in k_{\infty}$$
be a binary quadratic form of discriminant $\delta = \beta^2 - 4\alpha\gamma$, not a perfect square in $k$. Any quadratic algebraic element is a root of $f(x, 1) = 0$ for some such $(\alpha, \beta, \gamma)\in K^3$. We define
\begin{equation}
  \label{eq:1-11}
  \sigma := \sigma(f) = \inf\{|f(x, y)| :(x, y)\in K^2, \mbox{ and }(x, y)\neq 0\}
\end{equation}
and ($\theta \in k_{\infty}$)
\begin{equation}
  \label{eq:1-12}
  \tau(\theta) := \lim_{|Q|\rightarrow \infty}\inf\{|x|~||x\theta||: x\in K, \mbox{ and } |x|\geq |Q|\}.
\end{equation}
 $\tau(\theta)$ is called the approximation constant of $\theta$, as usual. It is easy to verify that $0 \leq \tau(\theta) \leq \frac{1}{q}$ (see Corollary~\ref{cor:2-1} below). We have
\begin{theorem}\label{thm:1-3}
  If $\alpha$ is a quadratic algebraic element over $k$, then
  \begin{equation}
    \label{eq:1-13}
    \tau(\alpha) = |\delta|^{-\frac{1}{2}}\sigma.
  \end{equation}
Furthermore, there are infinitely many polynomials $x\in K$, such that
\begin{equation}
  \label{eq:1-14}
  |x|~||x\alpha|| = \tau(\alpha).
\end{equation}
\end{theorem}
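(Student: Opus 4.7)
My plan hinges on the factorization $f(x,y)=A(x-\alpha y)(x-\alpha' y)$ over $k_\infty$, where I relabel the leading coefficient of $f$ as $A$ (to avoid the clash with the algebraic element $\alpha$) and $\alpha'$ denotes the Galois conjugate of $\alpha$. Because $\sqrt{\delta}=2A\alpha+B$, the hypothesis $\alpha\in k_\infty$ forces $\alpha'\in k_\infty$ as well, and $|\alpha-\alpha'|=|\delta|^{1/2}/|A|$.

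The first step is a pointwise identity. For $y\in K$ with $|y|$ large enough that $|\delta|^{1/2}|y|/|A|>1/q$, the choice $x_y=[\alpha y]$ gives $|x_y-\alpha y|=||y\alpha||\leq 1/q<|(\alpha-\alpha')y|$, so by the ultrametric inequality $|x_y-\alpha' y|=|\delta|^{1/2}|y|/|A|$, and consequently
\[
|f(x_y,y)|=|\delta|^{1/2}\,|y|\,||y\alpha||.
\]
A short ultrametric check shows $x_y$ is the unique $x\in K$ minimizing $|f(x,y)|$ for such $y$, since any other $x\in K$ gives $|x-\alpha y|\geq 1$ and forces $|f|$ to be much larger. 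Taking liminf as $|y|\to\infty$ immediately yields $\tau(\alpha)\geq|\delta|^{-1/2}\sigma$.

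For the reverse inequality and the realization assertion \eqref{eq:1-14}, I first observe that $|f|$ takes values in the discrete set $q^{\mathbb{Z}}\cup\{0\}$ and that a nontrivial zero of $f$ on $K^2$ would force $\alpha\in k$; thus $\sigma>0$ is attained at some $(x_0,y_0)\in K^2$. The main task is then to produce an infinite sequence $(x_n,y_n)\in K^2$ with $|f(x_n,y_n)|=\sigma$ and $|y_n|\to\infty$, for then the first-step identity forces $|y_n|\cdot||y_n\alpha||=|\delta|^{-1/2}\sigma$, which simultaneously delivers $\tau(\alpha)\leq|\delta|^{-1/2}\sigma$ and \eqref{eq:1-14}. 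To manufacture them I invoke the function-field Dirichlet unit theorem: because $\sqrt{\delta}\in k_\infty$, the infinite place of $k$ splits in $k(\alpha)/k$, so the order $K[A\alpha]$ contains a unit $u$ of infinite order with $u\bar u\in\fnum_q^*$ and $|u|>1$. Setting $\xi_n=u^n(x_0-\alpha y_0)$ and $\eta_n=\bar u^n(x_0-\alpha' y_0)=\bar\xi_n$, the Galois-invariant pair $y_n=(\eta_n-\xi_n)/(\alpha'-\alpha)$, $x_n=\xi_n+\alpha y_n$ lies in $K^2$ and satisfies $|f(x_n,y_n)|=|N(u)|^n\sigma=\sigma$ with $|y_n|\asymp|u|^n\to\infty$.

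The step I expect to require the most care is this construction of the infinite family of $K$-rational representations of $\sigma$. $K$-rationality of $(x_n,y_n)$ comes from the Galois-stability $\eta_n=\bar\xi_n$ together with the integrality of $u$ in $K[A\alpha]$, while $|y_n|\to\infty$ relies on $|u|\neq|\bar u|$, which holds because $|u|\,|\bar u|=|N(u)|=1$ and $u$ has infinite order modulo $\fnum_q^*$. Granting these facts, both parts of Theorem~\ref{thm:1-3} follow at once from the first-step identity.
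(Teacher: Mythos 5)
Your first step (the identity $|f([\alpha y],y)|=|\delta|^{1/2}|y|\,||y\alpha||$ for $|y|$ large, via the ultrametric inequality) is essentially the paper's part~$(i)$ of Theorem~\ref{thm:4-1}. Where you genuinely diverge from the paper is in producing the infinite family $(x_n,y_n)$ realizing $\sigma$: the paper constructs an automorphism $\mathsf T\in SL_2(K)$ of the form $f$ by a self-contained geometry-of-numbers / compactness argument (Lemma~\ref{lem:4-1}, the finiteness pigeonhole in Lemma~\ref{lem:4-2}, and Lemma~\ref{lem:4-3}), then iterates it in Corollary~\ref{cor:4-1}; you instead invoke the Dirichlet unit theorem for global function fields, using that $\alpha\in k_\infty$ forces $\infty$ to split in $k(\alpha)/k$ so the unit group of the order has rank one. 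These two routes are morally the same object --- $SL_2(K)$-automorphisms of $f$ correspond to norm-one units of the associated order --- but the paper's is internal to its geometry-of-numbers machinery (and in fact reuses Lemma~\ref{lem:2-4} to produce the approximants $A_n$), while yours imports a standard global theorem. Either works.

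A few things need repair, though they are not fatal. First, the direction of the unit: with $|u|>1$ the choice $\xi_n=u^n\xi_0$ gives $|x_n-\alpha y_n|=|\xi_n|\to\infty$, so $x_n$ is a bad approximant to $\alpha y_n$ (it is in fact a good approximant to $\alpha' y_n$). You need $u^{-n}$, or equivalently work with $\bar u$, so that $|\xi_n|\to 0$ and the first-step identity actually applies with $x_n=[\alpha y_n]$. Relatedly, $y_n$ should be $(\xi_n-\eta_n)/(\alpha'-\alpha)$, not $(\eta_n-\xi_n)/(\alpha'-\alpha)$, so that $x_n-\alpha'y_n=\eta_n$. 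Second, ``$|f|$ takes values in $q^{\mathbb Z}\cup\{0\}$ and has no nontrivial zero'' does not by itself give $\sigma>0$; the value set could still accumulate at $0$. You need the observation the paper makes: after clearing denominators the form has $K$-coefficients, hence $K$-values, hence $|f|\geq 1$ on $K^2\setminus\{0\}$, so $\sigma\geq 1$ and the discreteness then gives attainment. Third, the $K$-rationality of $(x_n,y_n)$ --- which you rightly flag as the delicate point --- does not follow merely from ``$u$ integral in $K[A\alpha]$'' plus Galois-stability: Galois invariance gives $x_n,y_n\in k$, and integrality requires the precise module-theoretic bookkeeping of the forms-ideals dictionary (e.g., that $\xi_0=x_0-\alpha y_0$ generates a fractional ideal on which $u$ acts with an integral matrix in the basis induced from $(x_0,y_0)$). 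That dictionary is classical and true here, but it needs to be stated; as written it is an assertion, not a proof.
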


The approximation constant $\tau(\theta)$ given by \eqref{eq:1-12} is a standard notion in real number fields (see page~11 of \cite{ref:5}). According to \cite{ref:9,ref:n5,ref:n6}, we may generalize this definition as follows: Let $\theta_1, \ldots, \theta_n$ be $n$ elements in $k_{\infty}$, and $\lambda$ be a real number. We define
\begin{equation}
  \label{eq:1-15n}
  B(\theta_1, \theta_2, \ldots, \theta_n, \lambda) : = \lim_{|Q|\rightarrow \infty} \inf\{ |x|^{\lambda} \prod_{j=1}^n || x\theta_j||: x\in K \mbox{ and } |x| \geq |Q|\},
\end{equation}
and define an approximation exponent for $(\theta_1, \dots, \theta_n)$ by
\begin{equation}
  \label{eq:1-16n}
  \tau^* (\theta_1, \theta_2, \ldots, \theta_n) := \sup\{\lambda:  B(\theta_1, \theta_2, \ldots, \theta_n, \lambda)< \infty \}.
\end{equation}
It is easily seen that if any of the $\theta_i$ are in $k$, then $ B(\theta_1, \theta_2, \ldots, \theta_n, \lambda) = 0$ for all real numbers $\lambda$, and thus $\tau^*(\theta_1, \theta_2, \ldots, \theta_n) = +\infty$. If all $\theta_i \not \in k~(1\leq i \leq n)$, then, by Theorem~\ref{thm:1-1} we have
\begin{equation}
  \label{eq:1-17n}
   B(\theta_1, \theta_2, \ldots, \theta_n, 1) \leq \frac{1}{q}.
\end{equation}
It follows that $\tau^*(\theta_1, \theta_2, \ldots, \theta_n) \geq 1$ for any choice of $\theta_i$ in $k_{\infty}$, $1\leq i \leq n$. This assertion is  fundamental and is a new result for $n > 1$ in the case of finite characteristic. In \cite{ref:n5}, deMathan provided some extreme examples with $\tau^*(\theta_1, \theta_2) > 1$ for some $\theta_1$ and $\theta_2$ in $k_{\infty}$. In particular, for $p=2$,  he showed that $\tau^*(\theta_1, \theta_2) > 1$ for all quadratic algebraic elements over $k$ in $k_{\infty}$ (see Theorem~2 of \cite{ref:n5}). It was conjectured by Littlewood that $B(\alpha, \beta, 1) = 0$ for all $\alpha$ and $\beta$ in $k_{\infty}$ (see \cite{ref:n5}).

In the last section of this paper, we give two simple examples of quadratic algebraic functions for which the approximation constant $\tau(\alpha)$ is exactly $q^{-d}~(d=1, 2, \ldots)$, and the degrees of the partial quotients of the continued fraction are bounded by a constant. If $d=1$, we set
\begin{equation}
  \label{eq:1-15}
  \alpha(a, b) = \frac{1}{2}(\sqrt{(aT+b)^2+4} - (aT+b)), \quad a\in \fnum_q^*, b\in \fnum_q.
\end{equation}
   This is the  analogue of the real number $\frac{1}{2}(\sqrt{5}-1)$ (see \cite{ref:n9}), because $\tau(\alpha(a, b)) = \frac{1}{q}$. In the classical case, a real number $\alpha$ has approximation constant $\frac{1}{\sqrt{5}}$, if and only if $\alpha$ is equivalent to $\frac{1}{2}(\sqrt{5}-1)$. But for function fields, we do not have this assertion.

Throughout this paper, we denote by $k_{\infty}^n$ the $n$ dimensional linear space over $k_{\infty}$. If $x=(x_1, x_2, \ldots, x_n)\in K^n$, we call $x$ is an integral point of $k_{\infty}^n$. In addition, if $x\neq 0$, it is said to be a nonzero integral point. We denote by $[\alpha]$ the integral part of $\alpha$. If $\alpha$ is a real number, then $[\alpha]$ is the integral part of $\alpha$ as usual.

\section{Homogeneous Approximation}
\label{sec:2}

We start this section with some basic notions from harmonic analysis on $k_{\infty}$. For more details the reader should consult \cite[Chapter~2]{ref:7} and \cite{ref:n8}. Let $N$ denote the valuation ideal of $k_{\infty}$,
$$N = \{\alpha \in k_{\infty} : v(\alpha) \geq 1\}. $$
Since $k_{\infty}$ is a complete metric space with respect to the valuation $v$, and is a local compact topological additive group, there exists a unique, up to a positive multiplicative constant, countably additive Haar measure. Let $\mu$ denote the Haar measure normalized to $1$ on $N$, so that
\begin{equation}
  \label{eq:2.1}
  \mu(\{\alpha\in k_{\infty}: v(\alpha) \geq m\}) = q^{1-m}, \quad m \in \mathbb{Z},
\end{equation}
and (see (2.15) of \cite{ref:7}, or (2) of page 3 of \cite{ref:15})
\begin{equation}
  \label{eq:2-2}
  \mu(\alpha B) = |\alpha|\mu(B),
\end{equation}
for all $\alpha\in k_{\infty}$, and all Borel subsets $B$ of $k_{\infty}$. If $\alpha\in k_{\infty}$, and $r\in \mathbb{Z}$ are given, we denote a ball of $k_{\infty}$ by
$$b(\alpha, r) = \{x\in k_{\infty}: v(x-\alpha) \geq r\}.$$
$b(\alpha, r)$ is usually said to be a ball of center $\alpha$ and radius $r$. We have $\mu(B(\alpha, r)) = q^{1-r}$ (see \cite{ref:14}, pp. 65--70), and $b(\alpha, r_2) \subset b(\alpha, r_1)$ if $r_1 \leq r_2$. In particular, if $\alpha\in K$, $\beta \in K$, and $\alpha\neq \beta$, then
\begin{equation}
  \label{eq:2-3}
  b(\alpha, r_1) \cap b(\beta, r_2) = \emptyset, \quad \mbox{if } \min\{r_1, r_2\}\geq 1.
\end{equation}

There is a natural product measure $\mu_n$ on $k_{\infty}^n$ defined by
\begin{equation}
  \label{eq:2-4}
  \mu_n(B) = \prod_{i=1}^n \mu(B_i),
\end{equation}
for $B = B_1 \times \cdots \times B_n \subset k_{\infty}^n$.
This measure is translation invariant. Sometimes, we simply write $\mu_n = \mu$ when the context is clear.
\begin{lemma}\label{lem:2-1}
  If $B\subset k_{\infty}^n$ is an additive subgroup, and $\mu(B) = \mu_n(B)> 1$, then there is a non-zero integral point $u \in B$.
\end{lemma}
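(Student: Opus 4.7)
The plan is to carry out the standard Blichfeldt--Minkowski pigeonhole argument, adapted to the ultrametric setting. The key ingredient is to exhibit an explicit fundamental domain for $K^n$ in $k_\infty^n$ of total measure exactly $1$, so that the hypothesis $\mu_n(B) > 1$ forces a collision modulo $K^n$.

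First I would observe that for every $\alpha \in k_\infty$ the decomposition $\alpha = [\alpha] + (\alpha - [\alpha])$ given in the introduction writes $\alpha$ as the sum of an element of $K$ and an element of $N$, and that this decomposition is unique: if $\alpha - [\alpha] = \alpha - A + (A - [\alpha])$ with $\alpha - A \in N$, then $A - [\alpha] \in K \cap N = \{0\}$. Hence $N$ is a set of coset representatives for $K$ in $k_\infty$, and consequently $N^n$ is a fundamental domain for $K^n$ in $k_\infty^n$. By (\ref{eq:2.1}) with $m = 1$ and the product definition (\ref{eq:2-4}),
\begin{equation*}
  \mu_n(N^n) = \mu(N)^n = 1^n = 1.
\end{equation*}

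Next I would decompose $B$ according to cosets of $K^n$: since the translates $\{a + N^n\}_{a \in K^n}$ partition $k_\infty^n$, countable additivity gives
\begin{equation*}
  \mu_n(B) = \sum_{a \in K^n} \mu_n\bigl(B \cap (a + N^n)\bigr) = \sum_{a \in K^n} \mu_n\bigl((B - a) \cap N^n\bigr),
\end{equation*}
where the last equality uses the translation invariance of $\mu_n$. The sets $(B - a) \cap N^n$ all lie inside $N^n$, which has measure $1$. If they were pairwise disjoint, the total would be at most $1$, contradicting $\mu_n(B) > 1$. Therefore there exist distinct $a_1, a_2 \in K^n$ and points $b_1, b_2 \in B$ with $b_1 - a_1 = b_2 - a_2 \in N^n$, so that
\begin{equation*}
  u := b_1 - b_2 = a_1 - a_2
\end{equation*}
is a nonzero element of $K^n$. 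Since $B$ is an additive subgroup, $u \in B$, and this is the required nonzero integral point.

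The main delicate point is really the pigeonhole step --- making sure the partition of $B$ into $B \cap (a + N^n)$ is countable and measurable so that $\sigma$-additivity applies. This is harmless because $K^n$ is countable and each piece is the intersection of the measurable set $B$ with a Borel translate of $N^n$. Beyond this, everything reduces to the three facts already established in the excerpt: the existence of the normalized Haar measure, the translation-invariance/scaling identity (\ref{eq:2-2}), and uniqueness of the integral part $[\,\cdot\,]$. No use is made of the subgroup hypothesis except at the very last step, where it converts the difference $b_1 - b_2$ back into an element of $B$; this is why the result is stronger than what one gets from Blichfeldt applied to an arbitrary measurable set.
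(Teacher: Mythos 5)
Your proof is correct and follows essentially the same route as the paper: the sets you call $(B - a) \cap N^n$ are exactly the paper's $\mathcal{R}'_a$ (the paper's $b(u,1)$ is the coset $u + N^n$), and both arguments run the Blichfeldt-style pigeonhole over the translates inside the measure-one fundamental domain $N^n$ before invoking the subgroup property at the end. The only cosmetic difference is that you explicitly verify uniqueness of the integral part to justify that $N^n$ is a fundamental domain, whereas the paper appeals directly to the disjointness of balls $b(u,1)$ via (2.3).
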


\begin{proof}
  Since $B$ is a subgroup of $k_{\infty}^n$, it suffices to show that there are $x$ and $y$ in $B$, $x\neq y$, such that $x-y$ has integral point coordinates. To prove this, if $u = (u_1, u_2, \ldots, u_n) \in K^n$ is given, let
  \begin{equation}
    \label{eq:2-5}
    \mathcal{R}_u = B \cap b(u, 1), \mbox{ and } \mathcal{R}_u' = \{x-u: x\in \mathcal{R}_u\},
  \end{equation}
where $b(u, 1) = b(u_1, 1) \times b(u_2, 1) \times \cdots \times b(u_n, 1)$. If $u, u' \in K^n$, and $u \neq u'$, it is easy to see that $\mathcal{R}_u \cap \mathcal{R}_{u'} = \emptyset$ by \eqref{eq:2-3}. So we have
\begin{equation}
  \label{eq:2-6}
  \mu(B) = \sum_{u}\mu(\mathcal{R}_u) = \sum_{u}\mu(\mathcal{R}'_u) > 1.
\end{equation}

If $\mathcal{R}'_u \cap \mathcal{R}'_{u'} = \emptyset$ for all integral points $u$ and $u'$ with $u\neq u'$, then there is a contradiction by \eqref{eq:2-6}. Since $\mathcal{R}_u' \subset N^n$ for all integral points, and so $\bigcup\limits_{u}\mathcal{R}_u' \subset N^n$, it follows that
\begin{equation}
  \label{eq:2-7}
  \mu(\bigcup\limits_u \mathcal{R}'_u) = \sum_{u}\mu(\mathcal{R}'_u) \leq 1.
\end{equation}
Therefore, there are at least two different integral points $u$ and $u'$ such that $\mathcal{R}'_u \cap \mathcal{R}'_{u'}$ is not empty. Hence, we may find $x\in \mathcal{R}_u$, $y \in \mathcal{R}_u$, such that $x-y = u-u' \neq 0$, completing the proof.
\end{proof}

\begin{remarks}
  Letting $B=N^n$ in Lemma~\ref{lem:2-1}, it is easy to see that the condition $\mu(B)>1$ is  best possible.
\end{remarks}

The next lemma gives an integral formula for a linear change of variables. It may be deduced from  \cite[page~17]{ref:n8} or \cite[page~7, Corollary~3]{ref:15}. Here we give an elementary proof using induction.

\begin{lemma}\label{lem:2-2}
  For $n\geq 1$, $f\in L^1(\mu_n)$, and $u\in GL(n, k_{\infty})$, one has
  \begin{equation}
    \label{eq:2-8}
    \int_{k_{\infty}^n}f(ux){\rm d}\mu_n(x) = \frac{1}{|\det u|}\int_{k_{\infty}^n} f(x){\rm d}\mu_n(x).
  \end{equation}
\end{lemma}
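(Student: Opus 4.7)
The plan is to proceed by induction on $n$, with the base case $n=1$ being the scaling property \eqref{eq:2-2} applied to the indicator of a Borel set. By standard measure-theoretic approximation (indicators of Borel boxes, linearity of the integral, monotone convergence), it suffices to prove \eqref{eq:2-8} for such indicators, i.e., $\mu_n(u^{-1}B) = |\det u|^{-1}\mu_n(B)$.

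For the inductive step, suppose the formula holds in dimension $n-1$, and let $u \in GL(n, k_\infty)$. Both sides of \eqref{eq:2-8} are multiplicative in $u$ (applying the identity twice and using $|\det(u_1 u_2)| = |\det u_1|\,|\det u_2|$), so I may factor $u$ into pieces and verify the formula on each factor separately. First, for a permutation matrix $P$, one has $|\det P|=1$, and \eqref{eq:2-8} reduces to Fubini's theorem (interchanging the order of integration over the permuted coordinates). After multiplying $u$ on the left by a suitable permutation (a nonzero entry in the last column of $u$ exists by invertibility), I may therefore assume $\alpha := u_{nn} \neq 0$. Writing $u$ in block form with top-left $(n-1)\times(n-1)$ block $A$, last column $b$ above $\alpha$, and last row $c^T$ left of $\alpha$, the Schur-complement identity yields
\begin{equation*}
u = \begin{pmatrix} I_{n-1} & b/\alpha \\ 0 & 1 \end{pmatrix} \begin{pmatrix} S & 0 \\ 0 & \alpha \end{pmatrix} \begin{pmatrix} I_{n-1} & 0 \\ c^T/\alpha & 1 \end{pmatrix},
\end{equation*}
where $S = A - bc^T/\alpha$ lies in $GL(n-1, k_\infty)$, its invertibility following from $\det u = \alpha \det S$.

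I would then verify \eqref{eq:2-8} for the three factors separately. Each outer unipotent factor has $|\det| = 1$; applying Fubini and integrating first in the group of coordinates it affects, translation invariance of the one-dimensional Haar measure $\mu$ shows the integral is unchanged. For the middle block-diagonal factor, Fubini separates the integral into an $(n-1)$-dimensional piece in $(x_1, \ldots, x_{n-1})$ and a one-dimensional piece in $x_n$; the inductive hypothesis applied to $S$ yields the factor $|\det S|^{-1}$, and \eqref{eq:2-2} applied to multiplication by $\alpha$ yields $|\alpha|^{-1}$, producing the required $|\alpha \det S|^{-1} = |\det u|^{-1}$. The main obstacle is bookkeeping, namely setting up the Schur factorization correctly and tracking Fubini applications at each stage; the substantive analytic content is already contained in \eqref{eq:2-2} and in the translation invariance of $\mu$.
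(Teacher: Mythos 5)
Your proposal is correct and rests on the same strategy as the paper: induction on $n$, reduction to a block factorization of $u$, and verification of the formula on each factor via Fubini, translation invariance of $\mu$, the scalar case \eqref{eq:2-2}, and the inductive hypothesis. The difference is in the chosen decomposition. The paper permutes coordinates so that the top-left $n\times n$ block $\mathsf A$ is invertible and uses a \emph{two}-factor block-triangular decomposition
\begin{equation*}
\mathsf M = \begin{pmatrix} \mathsf A & 0 \\ \mathsf L & 1 \end{pmatrix}\begin{pmatrix} \mathsf I_n & \mathsf A^{-1}\mathsf C \\ 0 & a - \mathsf L\mathsf A^{-1}\mathsf C \end{pmatrix},
\end{equation*}
whereas you permute rows so that the scalar entry $u_{nn}$ is nonzero and use the \emph{three}-factor LDU (Schur-complement) decomposition with two unipotent outer factors and a block-diagonal middle. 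Your route is slightly cleaner in two small ways: reducing to $u_{nn}\neq 0$ requires only a single row permutation (the last column of an invertible matrix is nonzero), whereas the paper's assumption that the top $n\times n$ block is invertible silently appeals to a cofactor-expansion argument and a column permutation; and you explicitly remark that \eqref{eq:2-8} is multiplicative in $u$ and that permutation matrices are handled by Fubini, both of which the paper leaves implicit. The substance — what each approach buys — is the same: the hard content is in \eqref{eq:2-2} and translation invariance, and the factorization is bookkeeping.
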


\begin{proof}
  The proof goes by induction on $n$. The compatibility condition \eqref{eq:2-2} ensures that Formula \eqref{eq:2-8} holds when $n=1$.

Assume now that this formula holds for a certain $n$, for all $u$ and $f$. Take $u\in GL(n+1, k_{\infty})$. If the coordinates are suitably enumerated, the matrix of $u$ takes the form
\begin{equation*}
  {\mathsf M} = \left(
    \begin{array}[c]{cc}
      {\mathsf A} & {\mathsf C}\\
{\mathsf L} & a
    \end{array}
\right),
\end{equation*}
where ${\mathsf A}$ is an invertible $n\times n$-matrix, ${\mathsf C}$ a column matrix, ${\mathsf L}$ a row matrix, and $a\in k_{\infty}$. The following formula is easily checked:
\begin{equation}
\label{eq:2-9}
  {\mathsf M} = \left(
    \begin{array}[c]{cc}
      {\mathsf A} & 0\\
      {\mathsf L} & 1
    \end{array}
\right) \left(
  \begin{array}[c]{cc}
    {\mathsf I}_n & {\mathsf A}^{-1}{\mathsf C}\\
    0 & a-{\mathsf LA}^{-1}{\mathsf C}
  \end{array}
\right).
\end{equation}
In particular we note from the formula that $a - {\mathsf LA}^{-1}{\mathsf C} \neq 0$.

Write points of $k_{\infty}^{n+1}$ as $({\mathsf X}, y)$, where ${\mathsf X}$ is an $n\times 1$-matrix and $y\in k_{\infty}$. Let $\delta$ and $w$ stand for the elements of $SL(n+1, k_{\infty})$, the matrices of which are the factors on the right hand side of \eqref{eq:2-9}.

Let $f\in L^1(k_{\infty}^{n+1})$. One has
\begin{equation*}
  \begin{split}
    \int_{k_{\infty}^{n+1}} f \circ \delta~ {\rm d}\mu_{n+1} &= \int_{k_{\infty}^n}\left(\int f({\mathsf AX}, y+{\mathsf LX}){\rm d}y \right){\rm d}{\mathsf X}\\
&= \iint_{k_{\infty}^{n+1}}f({\mathsf AX}, y)~{\rm d}{\mathsf X}{\rm d}y \mbox{~~~~~~~~(translation invariance of $\mu$)} \\
&= |\det {\mathsf A}|^{-1} \iint_{k_{\infty}^{n+1}}f({\mathsf X}, y)~{\rm d}{\mathsf X}{\rm d}y \mbox{~~~~(recursion hypothesis)}\\
&= |\det \delta|^{-1}\int_{k_{\infty}^{n+1}}f~{\rm d}\mu_{n+1}.
  \end{split}
\end{equation*}

In the same way one has, setting $b = a - {\mathsf LA}^{-1}{\mathsf C}$,
\begin{equation*}
\begin{split}
    \int_{k_{\infty}^{n+1}} f \circ w~ {\rm d}\mu_{n+1} &= \int_{k_{\infty}}\left(\int_{k_\infty^n} f({\mathsf X}+{\mathsf A}^{-1}{\mathsf C}y, by){\rm d}{\mathsf X} \right){\rm d}y\\
&= \iint_{k_{\infty}^{n+1}}f({\mathsf X}, by){\rm d}{\mathsf X}{\rm d}y \mbox{~~~~~~~~(translation invariance of $\mu_n$)} \\
&= |b|^{-1} \iint_{k_{\infty}^{n+1}}f({\mathsf X}, y){\rm d}{\mathsf X}{\rm d}y \mbox{~~~~(scalar case)}\\
&= |\det w|^{-1}\int_{k_{\infty}^{n+1}}f{\rm d}\mu_{n+1}.
\end{split}
\end{equation*}

Finally,
\begin{equation*}
  \begin{split}
    \int_{k_{\infty}^{n+1}} f \circ w~ {\rm d}\mu_{n+1} &= \int_{k_\infty^{n+1}} f\circ \delta \circ w~{\rm d}\mu_{n+1}\\
&= |\det w|^{-1}\int_{k_{\infty}^{n+1}}f\circ \delta~{\rm d}\mu_{n+1}\\
&= |\det w|^{-1}|\det \delta|^{-1}\int_{k_{\infty}^{n+1}}f~{\rm d}\mu_{n+1}\\
&= |\det u|^{-1}\int_{k_{\infty}^{n+1}}f~{\rm d}\mu_{n+1}.
  \end{split}
\end{equation*}

\end{proof}

Next, we construct some additive subgroups of $k_{\infty}^n$ by making use of some linear type inequalities. Let
\begin{equation*}
  L(x) = L(x_1, \ldots, x_m) = \sum_{i=1}^m a_ix_i, \quad a_i \in k_{\infty},
\end{equation*}
    a linear form in $m$ variables. If ${\mathsf A} = (a_{ij})_{n\times m}$ is an $n\times m$ matrix in $k_{\infty}$, then ${\mathsf A}$ defines $n$ linear forms $L_1(x), L_2(x), \ldots, L_n(x)$ (each in $m$ variables) by
\begin{equation*}
  \left(
    \begin{array}[c]{c}
      L_1(x)\\
      L_2(x)\\
      \vdots\\
      L_n(x)
    \end{array}
  \right) = {\mathsf A} \left(
    \begin{array}[c]{c}
      x_1\\
      x_2\\
      \vdots\\
      x_m
    \end{array}
\right),
\end{equation*}
where $L_i(x) = \sum_{j=1}^m a_{ij}x_j~(1\leq i \leq n)$. Let $r = (r_1, r_2, \ldots, r_n) \in \mathbb{Z}^n$ be $n$ given integers. Then ${\mathsf A}$ and $r$ define a subset of $k_{\infty}^m$ by
\begin{equation}
  \label{eq:2-10}
\Gamma({\mathsf A}, r) = \{x=(x_1, \ldots, x_m)\in k_{\infty}^m: v(L_i(x))\geq r_i, 1\leq i \leq n  \}.
\end{equation}
${\mathsf A}$ is said to be the corresponding matrix for the system of $n$ linear inequalities $v(L_i(x)) \geq r_i~(1\leq i \leq n)$. We have

\begin{lemma}\label{lem:2-3}
Let ${\mathsf A} = (a_{ij})_{n\times m}$ be an $n\times m$ matrix in $k_{\infty}$, and $r=(r_1,r_2, \ldots, r_n)$ be $n$ integers, then $\Gamma(A, r)$ is an additive subgroup of $k_{\infty}^m$.
\end{lemma}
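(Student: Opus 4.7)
The plan is to verify the two closure axioms for an additive subgroup, relying on linearity of the $L_i$ and the non-Archimedean (ultrametric) property of the valuation $v$ on $k_\infty$.

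First I would note that $\Gamma(\mathsf{A}, r)$ is nonempty since $x = 0$ satisfies $L_i(0) = 0$ and $v(0) = +\infty \geq r_i$ for all $i$. Next, for closed under addition: given $x, y \in \Gamma(\mathsf{A}, r)$, the linearity of each form yields $L_i(x + y) = L_i(x) + L_i(y)$, and the ultrametric inequality (which holds for $v$ on $k_\infty$, since $|\alpha + \beta| \leq \max\{|\alpha|, |\beta|\}$ translates to $v(\alpha + \beta) \geq \min\{v(\alpha), v(\beta)\}$) then gives
\begin{equation*}
v(L_i(x + y)) \geq \min\{v(L_i(x)), v(L_i(y))\} \geq r_i
\end{equation*}
for each $i$, so $x + y \in \Gamma(\mathsf{A}, r)$.

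Finally, for closed under additive inverses: if $x \in \Gamma(\mathsf{A}, r)$, then $L_i(-x) = -L_i(x)$, and since $v(-\alpha) = v(\alpha)$ (as $|-1| = 1$), we get $v(L_i(-x)) = v(L_i(x)) \geq r_i$, so $-x \in \Gamma(\mathsf{A}, r)$. These three facts together show $\Gamma(\mathsf{A}, r)$ is an additive subgroup of $k_\infty^m$. There is no real obstacle here; the lemma is a routine structural observation whose only content is that the level sets $\{v \geq r\}$ are closed under the group operation on $k_\infty$, a direct consequence of the non-Archimedean valuation. The statement is recorded primarily so that Lemma~\ref{lem:2-1} can later be applied to sets of the form $\Gamma(\mathsf{A}, r)$ in the proofs of the main simultaneous approximation theorems.
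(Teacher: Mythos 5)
Your proof is correct and follows essentially the same route as the paper: linearity of the forms plus the ultrametric inequality $v(\alpha+\beta)\geq\min\{v(\alpha),v(\beta)\}$ gives closure under addition, and $v(-\alpha)=v(\alpha)$ gives closure under negation. The paper phrases the second step slightly more generally by observing closure under multiplication by any $a\in\fnum_q^*$ (and then specializing to $a=-1$), but this is a cosmetic difference, not a different argument.
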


\begin{proof}
  If $x = (x_1,\ldots, x_m) \in \Gamma({\mathsf A}, r)$, and $y=(y_1, y_2, \ldots, y_m) \in \Gamma({\mathsf A}, r)$, then for each $i$, $1\leq i \leq n$, we have
  \begin{equation*}
    v(\sum_{j=1}^m a_{ij}(x_j + y_j)) \geq \min\left\{v(\sum_{j=1}^m a_{ij}x_j), v(\sum_{j=1}^m a_{ij}y_j)\right\} \geq r_i.
  \end{equation*}
If follows that $x+y \in \Gamma({\mathsf A}, r)$. Suppose that $a\in \fnum_q^*$ is a non-zero constant in $\fnum_q$, then
\begin{equation*}
  v(\sum_{j=1}^n a_{ij} a x_j) = v(\sum_{j=1}^n a_{ij}x_j) \geq r_i.
\end{equation*}
Hence $ax\in \Gamma({\mathsf A}, r)$. In particular, $-x \in \Gamma({\mathsf A}, r)$. Therefore, $\Gamma({\mathsf A}, r)$ is an additive subgroup of $k_{\infty}^m$, completing the proof of the lemma.
\end{proof}

\begin{lemma}[Basic lemma]
  \label{lem:2-4} Let ${\mathsf A}=(a_{ij})_{n\times n}$ be an $n\times n$ matrix in $k_{\infty}$ with $\det({\mathsf A}) \neq 0$, and $r=(r_1, r_2, \ldots, r_n)$ be an $n$-tuple of integers. If $r_1 + r_2 + \cdots + r_n < v(\det({\mathsf A})) + n$, then there is a non-zero integral point $u \in \Gamma({\mathsf A}, r)$.
\end{lemma}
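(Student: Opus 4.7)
The plan is to derive the lemma by pulling back the ``box'' defined by the inequalities $v(L_i(x)) \geq r_i$ to a product of balls via the linear change of variables supplied by Lemma~\ref{lem:2-2}, computing its Haar volume, and then invoking Lemma~\ref{lem:2-1}. By Lemma~\ref{lem:2-3} we already know that $B := \Gamma(\mathsf{A}, r)$ is an additive subgroup of $k_\infty^n$, so the only real work is to verify $\mu_n(B) > 1$.

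First I would observe that $B = \mathsf{A}^{-1}(D)$, where
\begin{equation*}
D = \{y = (y_1, \ldots, y_n) \in k_\infty^n : v(y_i) \geq r_i, 1 \leq i \leq n\} = \prod_{i=1}^n \{y_i \in k_\infty : v(y_i) \geq r_i\}.
\end{equation*}
Letting $f = \chi_D$ denote the indicator function of $D$, one has $\chi_D(\mathsf{A}x) = \chi_B(x)$, so Lemma~\ref{lem:2-2} (applied to $u = \mathsf{A}$, which is invertible by hypothesis) gives
\begin{equation*}
\mu_n(B) = \int_{k_\infty^n} \chi_D(\mathsf{A}x)\, {\rm d}\mu_n(x) = \frac{1}{|\det \mathsf{A}|}\int_{k_\infty^n} \chi_D(x)\, {\rm d}\mu_n(x) = \frac{\mu_n(D)}{|\det \mathsf{A}|}.
\end{equation*}

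Next I would compute $\mu_n(D)$ using the product structure \eqref{eq:2-4} together with the basic volume formula \eqref{eq:2.1}: each factor contributes $\mu(\{y_i : v(y_i) \geq r_i\}) = q^{1-r_i}$, so $\mu_n(D) = q^{n - (r_1 + \cdots + r_n)}$. Since $|\det \mathsf{A}| = q^{-v(\det \mathsf{A})}$, combining these yields
\begin{equation*}
\mu_n(B) = q^{n + v(\det \mathsf{A}) - (r_1 + \cdots + r_n)}.
\end{equation*}
The hypothesis $r_1 + \cdots + r_n < v(\det \mathsf{A}) + n$ forces the exponent to be a positive integer, hence $\mu_n(B) \geq q > 1$. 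Applying Lemma~\ref{lem:2-1} to $B$ produces a nonzero integral point $u \in \Gamma(\mathsf{A}, r)$, completing the proof.

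There is no real obstacle here beyond setting up the change of variables correctly; all the substantive work has been absorbed into Lemmas~\ref{lem:2-1} and~\ref{lem:2-2}. The only small point to be careful about is that the change of variables formula is stated for $L^1$ functions, so one should note that $\chi_D$ is integrable because $D$ is a bounded (and in particular compact) subset of $k_\infty^n$, so its indicator has finite integral.
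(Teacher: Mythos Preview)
Your proof is correct and follows essentially the same approach as the paper: both compute $\mu_n(\Gamma(\mathsf{A},r))$ by applying the change-of-variables formula (Lemma~\ref{lem:2-2}) to reduce to a product of balls, obtain $\mu_n(\Gamma(\mathsf{A},r)) = q^{\,v(\det \mathsf{A}) + n - \sum r_i}$, and then invoke Lemmas~\ref{lem:2-3} and~\ref{lem:2-1}. Your additional remarks (that the exponent is a positive integer so the measure is at least $q$, and that $\chi_D\in L^1$) are valid refinements but do not change the argument.
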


\begin{proof}
  Let ${\mathsf A}^{-1} = (b_{ij})_{n\times n}$. We write
  \begin{equation*}
    y_i = \sum_{j=1}^n a_{ij}x_j, \mbox{   and    } x_i = \sum_{j=1}^n b_{ij}y_i, ~(1 \leq i \leq n).
  \end{equation*}
If $x= (x_1, x_2, \ldots, x_n) \in \Gamma({\mathsf A}, r)$, then $v(y_i) \geq r_i$ for all $1\leq i \leq n$. By Lemma~\ref{eq:2-3}, if we write ${\rm d}u_n =  {\rm d}x_1{\rm d}x_2\cdots {\rm d}x_n$, then
\begin{equation*}
  \begin{split}
    \mu(\Gamma({\mathsf A}, r)) &= \mu_n(\Gamma({\mathsf A}, r))  = \int\cdots \int{\rm d}x_1{\rm d}x_2\cdots {\rm d}x_n\\
                      &= \int\cdots \int_{v(y_i)\geq r_i} |\det({\mathsf A}^{-1})|{\rm d}y_1{\rm d}y_2\cdots {\rm d}y_n\\
                      &= |\det({\mathsf A})|^{-1} \prod_{i=1}^n q^{1-r_i}\\
                      &= q^{v(\det({\mathsf A})) + n - \sum r_i}.
  \end{split}
\end{equation*}
If $\sum_{i=1}^n r_i < v(\det({\mathsf A})) + n$, then $\mu(\Gamma({\mathsf A}, r)) > 1$. By Lemma~\ref{lem:2-1} and Lemma~\ref{lem:2-3}, the lemma follows immediately.
\end{proof}

\begin{remarks}
  The above lemma is an analogue of Minkowski's linear forms theorem; see Theorem~III of Appendix B of \cite{ref:6}.

The above results provide basic information about the geometry of polynomials in function fields, and give us a new tool to study Diophantine approximation in the case of positive characteristic.
\end{remarks}

\begin{theorem}\label{thm:2-1}
  Suppose that $\theta \in k_{\infty}$, and $H \in k_{\infty}$ with $|H| \geq q$. Then there is a polynomial $x\in K$, such that
  \begin{equation}
    \label{eq:2-11}
    ||x\theta || \leq \frac{1}{|H|}, \mbox{   and   } 1 \leq |x| < |H|.
  \end{equation}
\end{theorem}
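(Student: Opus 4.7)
The plan is to set up Theorem~\ref{thm:2-1} as a direct application of the basic lemma (Lemma~\ref{lem:2-4}) with $n=2$. The idea is to encode the two desired conclusions, that $|x|$ is not too large and that $x\theta$ is close to some polynomial, as two linear inequalities in two unknowns $(x,y)$, where $y$ is to play the role of the nearest polynomial to $x\theta$.

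Concretely, I would consider the pair of linear forms
\begin{equation*}
L_1(x,y) = x, \qquad L_2(x,y) = x\theta - y,
\end{equation*}
whose matrix is ${\mathsf A} = \left(\begin{smallmatrix} 1 & 0 \\ \theta & -1 \end{smallmatrix}\right)$. This is triangular with determinant $-1$, so $v(\det {\mathsf A}) = 0$. Writing $|H| = q^{h}$ with $h \geq 1$, I would set $r_1 = 1 - h$ and $r_2 = h$, so that the constraint $v(L_1)\geq r_1$ reads $|x| \leq q^{h-1} < |H|$ and $v(L_2)\geq r_2$ reads $|x\theta - y| \leq q^{-h} = |H|^{-1}$. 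Then
\begin{equation*}
r_1 + r_2 = 1 < 2 = v(\det {\mathsf A}) + n,
\end{equation*}
so the hypothesis of Lemma~\ref{lem:2-4} is satisfied and we obtain a nonzero integral point $(x,y) \in K^2$ in $\Gamma({\mathsf A}, r)$.

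It remains to check that in fact $x \neq 0$, so that $|x| \geq 1$ (since every nonzero element of $K$ has absolute value $\geq 1$). If instead $x = 0$, then the condition $v(x\theta - y) \geq h \geq 1$ forces $|y| \leq q^{-h} < 1$; since $y \in K$ this means $y=0$, contradicting $(x,y)\neq 0$. Therefore $x\neq 0$, and then $\|x\theta\| \leq |x\theta - y| \leq 1/|H|$ by the second inequality of \eqref{eq:1-5}, while $1 \leq |x| < |H|$ from the first constraint. This completes the argument.

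There is no serious obstacle here; the only subtlety is the bookkeeping between the valuation inequalities $v(\,\cdot\,)\geq r_i$ and the desired absolute value bounds, and the small case analysis ruling out $x=0$. The content of the theorem is essentially just the specialization of the Minkowski-type Lemma~\ref{lem:2-4} to the simultaneous pair ``one bound on the denominator, one bound on the approximation error.''
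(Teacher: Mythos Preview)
Your proof is correct and essentially identical to the paper's: both apply Lemma~\ref{lem:2-4} with $n=2$ to the pair of linear forms $x$ and $x\theta \pm y$, choose the thresholds so that $r_1+r_2=1<2$, and then rule out $x=0$ by the same contradiction. The only cosmetic differences are the ordering of the two forms, the sign on $y$, and that you write $|H|=q^h$ directly while the paper first reduces to $H\in K$ and works with $\deg H$.
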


\begin{proof}
  We first suppose that $H\in K$ is a polynomial, and consider the following linear inequalities
  \begin{equation}
    \label{eq:2-12}
    \left\{
      \begin{tabular}[l]{l}
        $v(x\theta + y) \geq \deg H$\\
        $v(x) \geq -\deg H +1$.
      \end{tabular}
\right.
  \end{equation}
The corresponding matrix of \eqref{eq:2-12} is ${\mathsf A} = \left(
  \begin{array}[c]{cc}
    \theta & 1\\
    1 & 0
  \end{array}
\right)$, and so $v(\det({\mathsf A})) = 0$. Since $r_1 + r_2 = 1$, by Lemma~\ref{lem:2-4}, there is a non-zero integer point $(x, y)\in K^2$ satisfying \eqref{eq:2-12}. Since $(x, y) \neq 0$, we claim that $x\neq 0$. Indeed, if $x=0$, then by \eqref{eq:2-12}, $v(y)\geq \deg H \geq 1$, but $y\in K$, $y\neq 0$, and so $v(y)\leq 0$, a contradiction. Hence $x\neq 0$, and it follows by \eqref{eq:2-12} again that
\begin{equation}
  \label{eq:2-13}
  ||x\theta|| \leq |x\theta+y| \leq \frac{1}{|H|}, \mbox{   and   } 1\leq |x| \leq \frac{1}{q}|H| < |H|,
\end{equation}
yielding the result of the theorem.
If $H$ is not a polynomial, we replace $H$ by $[H]$, and note that $|H| = |[H]|$, when $|H|\geq 1$~(see \eqref{eq:1-3}). Again the theorem holds true.
\end{proof}

\begin{remarks}
  If $H$ is a polynomial in Theorem \eqref{thm:2-1}, and $\theta = \frac{1}{H}$, it is easy to see that
  \begin{equation*}
    \left|\left|\frac{x}{H}\right|\right| = \left|\frac{x}{H}\right| \geq \frac{1}{|H|}
  \end{equation*}
for all $x\in K$ and $1\leq |x| < |H|$. Hence the ``$\leq$'' in \eqref{eq:2-11} cannot be improved to ``$<$''. The theorem above is clearly well known as Dirichlet's theorem, but the proof we give here is quite different from the classical method, for example, the proof of Theorem~9.1.1 of \cite{ref:n3}.
\end{remarks}

As we already pointed out in the introduction, if $|x|<|H|$, then $|x|\leq \frac{1}{q}|H|$. By \eqref{eq:2-11}, it follows that
\begin{equation*}
  |x|~||x\theta|| \leq \frac{1}{q}|H|~||x\theta|| \leq \frac{1}{q}.
\end{equation*}
Thus the following corollary is immediate.

\begin{corollary}\label{cor:2-1}
  If $\theta \in k_{\infty}$, then there are infinitely many polynomials $x\in K$, such that
  \begin{equation}
    \label{eq:2-14}
    |x|~||x\theta||\leq \frac{1}{q}.
  \end{equation}
\end{corollary}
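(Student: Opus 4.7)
The plan is to derive the existence of one such solution directly from Theorem~\ref{thm:2-1} (as already noted in the remark preceding the corollary) and then to upgrade it to infinitely many solutions by a standard density argument. I would split the proof according to whether $\theta$ lies in $k$ or not.

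If $\theta \in k$, write $\theta = P/Q$ with $P, Q \in K$ and $Q \neq 0$; then for every nonzero $n \in K$ the choice $x = Qn$ yields $x\theta = Pn \in K$, so $||x\theta|| = 0$ and $|x|\cdot ||x\theta|| = 0 \leq 1/q$. This produces infinitely many admissible $x$ immediately.

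For the main case $\theta \in k_\infty \setminus k$, I would argue by contradiction. First observe that for any nonzero $x \in K$ one has $||x\theta|| > 0$, since $||x\theta|| = 0$ would force $x\theta \in K$ and hence $\theta \in k$. Now suppose that only finitely many polynomials $x_1, \ldots, x_N$ satisfy $|x_i|\cdot ||x_i\theta|| \leq 1/q$, and set $\varepsilon := \min_{1 \leq i \leq N}||x_i\theta|| > 0$. Choose any $H \in k_\infty$ with $|H| \geq q$ and $|H| > 1/\varepsilon$, and apply Theorem~\ref{thm:2-1} to obtain $x \in K$ with $1 \leq |x| < |H|$ and $||x\theta|| \leq 1/|H|$. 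Because the value group is discrete, $|x| < |H|$ forces $|x| \leq |H|/q$, so
\[
|x|\cdot ||x\theta|| \leq \frac{|H|}{q}\cdot \frac{1}{|H|} = \frac{1}{q};
\]
thus $x$ would have to coincide with some $x_i$, whereas $||x\theta|| \leq 1/|H| < \varepsilon \leq ||x_i\theta||$ for every $i$, a contradiction.

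There is no real obstacle here: the only ingredient beyond Theorem~\ref{thm:2-1} is the observation that $||x\theta|| > 0$ for nonzero $x \in K$ when $\theta \notin k$, which provides the strict separation $\varepsilon > 0$ needed to distinguish the new solution from the members of the supposed finite list. Pushing $|H|$ beyond $1/\varepsilon$ then forces Theorem~\ref{thm:2-1} to deliver something outside that list.
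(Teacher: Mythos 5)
Your proof is correct and follows essentially the same route as the paper: both derive \eqref{eq:2-14} from Theorem~\ref{thm:2-1} by letting $|H|\to\infty$ and using that $|x|<|H|$ forces $|x|\le|H|/q$. The paper simply declares the corollary ``immediate'' from the remark preceding it; you tighten that up by splitting off the trivial case $\theta\in k$ and running the contradiction argument to make the infinitude of solutions explicit, which is a modest but welcome gain in rigor over the paper's terse treatment.
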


This corollary implies that the approximation constant $\tau(\alpha)$ given by \eqref{eq:1-12} is bounded by $\frac{1}{q}$ for all $\alpha\in k_{\infty}$, and that the upper bound $\frac{1}{q}$ is also best possible.

Next we give a proof of Theorem~\ref{thm:1-1}, which just is a high dimensional version of the preceding theorem.

\begin{proof}[Proof of Thoerem~\ref{thm:1-1}]
  Suppose that $\theta_1, \theta_2, \ldots, \theta_n$ are $n$ given elements in $k_{\infty}$, and $H\in k_{\infty}$ with $|H|\geq 1$. Without loss of generality, we suppose $H\in K$ is a polynomial with $\deg H \geq 0$, and consider the following $n+1$ linear inequalities
  \begin{equation}
    \label{eq:2-15}
    \left\{
      \begin{tabular}[l]{l}
        $v(\theta_ix+y_i) \geq \left[\frac{\deg H}{n}\right]+1, 1\leq i \leq n,$\\
        $v(x) \geq -\deg(H).$
      \end{tabular}
    \right.
  \end{equation}
It is easy to see that $v(\det(A)) = 0$, where ${\mathsf A}$ is the corresponding matrix of \eqref{eq:2-15}, and
\begin{equation*}
  r_1 + r_2 + \cdots + r_{n+1} = n + n\left[\frac{\deg H}{n}\right] - \deg H \leq n.
\end{equation*}

By Lemma~\ref{lem:2-4}, there is a non-zero integral point $X = (x, y_1, y_2, \ldots, y_n)$ such that \eqref{eq:2-15} holds. If $x=0$, then there is a $y_i\neq 0$, and by \eqref{eq:2-15},
\begin{equation*}
  v(y_i)\geq \left[\frac{\deg H}{n}\right] + 1 \geq 1,
\end{equation*}
which contradicts $v(y_i) \leq 0$. Hence we have $x \neq 0$, and
\begin{equation*}
  || x \theta_i ||^n \leq |x\theta_i + y_i|^n < |H|^{-1}, 1\leq i \leq n, \mbox{ and } 1\leq |x| \leq |H|.
\end{equation*}
It follows that
$$||x\theta_i||^n \leq (q|H|)^{-1}, 1\leq i \leq n, $$
and we thus have Theorem~\ref{eq:1-1} immediately.
\end{proof}

\begin{corollary}\label{cor:2-2}
  If $\theta_1, \theta_2, \ldots, \theta_n\in k_{\infty}$, and at least one of $\theta_i \not \in k$, then there are infinitely many polynomials $x\in K$, such that
  \begin{equation}
    \label{eq:2-16}
    |x|^{\frac{1}{n}} \max\limits_{1\leq i \leq n} ||x\theta_i|| \leq \frac{1}{q}.
  \end{equation}
\end{corollary}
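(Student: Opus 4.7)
The plan is to apply Theorem~\ref{thm:1-1} along an unbounded sequence of parameters $H$ and then show that the polynomials it produces cannot all lie in any fixed finite set. The inequality in \eqref{eq:2-16} falls out of a single application of the theorem; the substance of the corollary is the infinitude, which is where the hypothesis ``$\theta_{i_0}\notin k$ for some $i_0$'' enters.

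First, for any $H\in K$ with $|H|\geq 1$, Theorem~\ref{thm:1-1} provides a polynomial $x\in K$ with
\begin{equation*}
\max_{1\leq i\leq n}||x\theta_i||\leq (q|H|)^{-1/n}\qquad\text{and}\qquad 1\leq |x|\leq |H|.
\end{equation*}
Raising the second bound to the power $1/n$ and multiplying with the first yields
\begin{equation*}
|x|^{1/n}\max_{1\leq i\leq n}||x\theta_i||\leq |H|^{1/n}(q|H|)^{-1/n}=q^{-1/n},
\end{equation*}
which is \eqref{eq:2-16} (consistently with the form \eqref{eq:1-8} stated in the introduction).

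To extract infinitely many such $x$ I would argue by contradiction. Assume only finitely many polynomials $x_1,\ldots,x_s\in K$ satisfy \eqref{eq:2-16}. Fix the index $i_0$ with $\theta_{i_0}\notin k$. For each $j$, the polynomial $x_j$ is nonzero, and since $\theta_{i_0}\notin k$, the product $x_j\theta_{i_0}$ cannot lie in $K$; hence $||x_j\theta_{i_0}||>0$. Set $m:=\min_{1\leq j\leq s}||x_j\theta_{i_0}||$, which is strictly positive. Choose $H\in K$ with $(q|H|)^{-1/n}<m$, and apply Theorem~\ref{thm:1-1} to this $H$: the resulting polynomial $x$ obeys $||x\theta_{i_0}||\leq (q|H|)^{-1/n}<m$, so $x\notin\{x_1,\ldots,x_s\}$, while still satisfying \eqref{eq:2-16}. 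This contradicts the supposed completeness of the list, proving that infinitely many polynomials $x$ satisfy \eqref{eq:2-16}.

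The only nontrivial step is the infinitude argument, and it reduces to the elementary observation that $||x\theta_{i_0}||>0$ for every nonzero $x\in K$ whenever $\theta_{i_0}\notin k$. Any finite collection of these positive numbers is bounded below, whereas the right-hand side $(q|H|)^{-1/n}$ supplied by Theorem~\ref{thm:1-1} can be made arbitrarily small by choosing $|H|$ large. This is the essential use of the hypothesis that some $\theta_i$ is not a rational function; without it, the approximation $x\theta_i$ is eventually exact and the corollary degenerates.
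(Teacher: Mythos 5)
Your approach matches the paper's in spirit (apply Theorem~\ref{thm:1-1} along an unbounded sequence of $H$), and your infinitude argument is actually more explicit than the paper's terse ``letting $|H|\to\infty$,'' spelling out exactly why some $\theta_{i_0}\notin k$ is needed. However, there is a genuine gap in the constant: you derive $|x|^{1/n}\max_i||x\theta_i||\leq q^{-1/n}$, but the corollary asserts the stronger bound $\leq \frac{1}{q}$. For $n>1$ these are different, since $q^{-1/n} > q^{-1}$; asserting that $q^{-1/n}$ ``is \eqref{eq:2-16}'' is not correct, and matching it with \eqref{eq:1-8} does not discharge the obligation to prove \eqref{eq:2-16}.

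The paper closes this gap by taking $\deg H$ to be a multiple of $n$, so that $\tfrac{\deg H}{n}$ is an integer, and then reading off the sharper estimate directly from the system \eqref{eq:2-15}: the first block of inequalities gives $v(x\theta_i + y_i)\geq \tfrac{\deg H}{n}+1$, hence $||x\theta_i||\leq \frac{1}{q}|H|^{-1/n}$, and combined with $|x|\leq |H|$ this yields $|x|^{1/n}\max_i||x\theta_i||\leq \frac{1}{q}$. You can reach the same conclusion without re-entering the proof of Theorem~\ref{thm:1-1}: with $\deg H = mn$, the stated bound $||x\theta_i||\leq (q|H|)^{-1/n}=q^{-m-1/n}$ forces $v(x\theta_i - [x\theta_i])\geq m+1$ because the valuation is an integer, so in fact $||x\theta_i||\leq q^{-m-1}=\frac{1}{q}|H|^{-1/n}$. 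Either way, the integrality of $v$ is what upgrades $q^{-1/n}$ to $\frac{1}{q}$. Your infinitude argument then goes through unchanged, restricting to $H$ with $\deg H$ a multiple of $n$, and yields infinitely many $x$ satisfying the correct bound \eqref{eq:2-16}.
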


\begin{proof}
  We take $\deg H$ as a multiple of $n$ in Theorem~\ref{thm:1-1}, so that $\frac{\deg H}{n}$ is an integer. By \eqref{eq:2-15}, we have
  \begin{equation*}
    ||x\theta_i|| \leq |x\theta_i + y_i| \leq \frac{1}{q}|H|^{-\frac{1}{n}}
  \end{equation*}
and $1 \leq |x| \leq |H|$. It follows that
\begin{equation*}
  |x|^{\frac{1}{n}} ||x\theta_i|| \leq |H|^{\frac{1}{n}}||x\theta_i|| \leq \frac{1}{q}, \quad 1 \leq i \leq n.
\end{equation*}
The corollary follows by letting $|H| \rightarrow \infty$.
\end{proof}

\begin{remarks}
  In the classical case, the simultaneous approximation constant $\frac{1}{q}$ in \eqref{eq:2-16} is replaced by $\frac{n}{n+1}$ (see Theorem~VII, page 14 of \cite{ref:6}), a value that tends to $1$  as $n\to \infty$. The situation is quite different from the positive characteristic case.
\end{remarks}

There is a transposed form of Theorem~\ref{thm:1-1}, which we may state as follows.

\begin{corollary}\label{cor:2-3}
  If $\theta_1, \theta_2, \ldots, \theta_n$ are $n$ elements in $k_{\infty}$, and $H\in k_{\infty}$ with $|H|\geq 1$, then there is an integral point $x = (x_1, x_2, \ldots, x_n)$, such that
  \begin{equation}
    \label{eq:2-17}
    ||x_1\theta_1 + x_2\theta_2 + \cdots + x_n\theta_n || \leq \frac{1}{q^n} |H|^{-n}, \mbox{ and } 1 \leq \max\limits_{1\leq i \leq n}|x_i| \leq |H|.
  \end{equation}
\end{corollary}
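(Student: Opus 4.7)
The plan is to apply the basic Lemma~\ref{lem:2-4} to a system of $n+1$ linear inequalities in $n+1$ variables, chosen so that their geometric content is precisely \eqref{eq:2-17}. The system will be the ``transpose'' of the one used in the proof of Theorem~\ref{thm:1-1}: there the unknowns were $(x,y_1,\ldots,y_n)$ with one row of size $n+1$ controlling $x$ and $n$ rows enforcing smallness of the $x\theta_i+y_i$; here the roles of the large and small coordinates are swapped, so a single row will enforce smallness of the linear combination while $n$ rows will control the $x_i$.

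First I would reduce to the case $H\in K$ by replacing $H$ with $[H]$ and invoking \eqref{eq:1-3}, writing $d=\deg H\ge 0$. Next, I would consider the $n+1$ linear forms
\begin{equation*}
L_0(x_1,\ldots,x_n,y)=\theta_1x_1+\cdots+\theta_nx_n+y,\qquad L_i(x_1,\ldots,x_n,y)=x_i\ (1\le i\le n),
\end{equation*}
together with the prescribed lower bounds $r_0=n+nd$ and $r_1=\cdots=r_n=-d$. The coefficient matrix ${\mathsf A}$ has first row $(\theta_1,\ldots,\theta_n,1)$ and rows $2,\ldots,n+1$ forming the $n\times n$ identity followed by a zero column; expanding along the last column shows $|\det{\mathsf A}|=1$, so $v(\det{\mathsf A})=0$. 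The sum of the lower bounds is
\begin{equation*}
r_0+r_1+\cdots+r_n=(n+nd)-nd=n<n+1=v(\det{\mathsf A})+(n+1),
\end{equation*}
so the hypothesis of Lemma~\ref{lem:2-4} is satisfied and yields a non-zero integral point $(x_1,\ldots,x_n,y)\in K^{n+1}$ with $v(L_j)\ge r_j$ for $0\le j\le n$.

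The inequalities $v(x_i)\ge -d$ give $|x_i|\le q^{d}=|H|$ for every $i$. The main remaining point, and the only one requiring a moment's care, is that the $n$-tuple $x=(x_1,\ldots,x_n)$ itself is non-zero; this is exactly the same obstacle that appears in Theorem~\ref{thm:2-1} and Theorem~\ref{thm:1-1}. If all $x_i=0$, then the bound $v(L_0)\ge n+nd\ge 1$ would force $v(y)\ge 1$ for some $y\in K\setminus\{0\}$ (which must be non-zero since the whole integer point is), contradicting $v(y)\le 0$. Thus $\max_i|x_i|\ge 1$. Finally, using $||\alpha||=\inf_{A\in K}|\alpha-A|\le |\alpha+y|$ applied to $\alpha=\theta_1x_1+\cdots+\theta_nx_n$, the bound on $L_0$ yields $||\theta_1x_1+\cdots+\theta_nx_n||\le q^{-n-nd}=q^{-n}|H|^{-n}$, which is \eqref{eq:2-17}.
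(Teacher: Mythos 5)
Your proposal is correct and follows essentially the same route as the paper: you set up the same $(n+1)\times(n+1)$ system of linear inequalities (your $r_0=n+nd$ is exactly the paper's $n(\deg H+1)$), verify $v(\det{\mathsf A})=0$ and $\sum r_i = n < n+1$, and invoke Lemma~\ref{lem:2-4}. You additionally spell out the non-vanishing of $(x_1,\ldots,x_n)$, which the paper leaves implicit.
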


\begin{proof}
  Let $H\in K$ be a polynomial, and consider the following $n+1$ linear inequalities
\begin{equation}
    \label{eq:2-18}
    \left\{
      \begin{tabular}[l]{l}
        $v(\theta_1x_1+\cdots + \theta_nx_n+y) \geq n(\deg H+1)$\\
        $v(x_i) \geq -\deg(H), 1\leq i \leq n.$
      \end{tabular}
    \right.
  \end{equation}
We have $v(\det({\mathsf A})) = 0$, and $r_1+r_2+\cdots+r_{n+1} = n$. By Lemma~\ref{lem:2-4}, there is a non-zero integral point $x= (x_1, x_2, \ldots, x_n)$, such that
\begin{equation*}
   ||x_1\theta_1 + \cdots + x_n\theta_n || \leq q^{-n} |H|^{-n}, \mbox{ and } 1 \leq \max\limits_{1\leq i \leq n}|x_i| \leq |H|.
\end{equation*}
The corollary follows at once.
\end{proof}

\begin{remarks}
  By \cite[Theorem~1.1]{ref:nn2}, one has
  \begin{equation*}
    ||x_1\theta_1 + x_2\theta_2 + \cdots + x_n\theta_n || \leq \frac{1}{q} |H|^{-n}, \mbox{ and } 1 \leq \max\limits_{1\leq i \leq n}|x_i| \leq |H|,
  \end{equation*}
which is weaker than Corollary~\ref{cor:2-3} as $n > 1$.
\end{remarks}

As a direct consequence of Corollary~\ref{cor:2-3}, we have
\begin{corollary}
  Let $\theta_1, \theta_2, \ldots, \theta_n$ be $n$ elements in $k_{\infty}$. Then there are infinitely many integral points $x = (x_1, x_2, \ldots, x_n)\in K^n$, such that
  \begin{equation}
    \label{eq:2-19}
    (\max\limits_{1\leq i \leq n}|x_i|)^n||x_1\theta_1 + \cdots + x_n\theta_n || \leq \frac{1}{q^n}.
  \end{equation}
\end{corollary}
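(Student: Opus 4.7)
The plan is to derive this directly from Corollary~\ref{cor:2-3} by letting $H$ tend to infinity. The multiplicative inequality in \eqref{eq:2-19} is essentially automatic: for each $H\in k_{\infty}$ with $|H|\ge 1$, Corollary~\ref{cor:2-3} supplies a nonzero integral point $x=(x_1,\ldots,x_n)$ satisfying $\max_i|x_i|\le |H|$ and $||x_1\theta_1+\cdots+x_n\theta_n||\le q^{-n}|H|^{-n}$. Multiplying these two bounds gives $(\max_i|x_i|)^n\,||x_1\theta_1+\cdots+x_n\theta_n||\le |H|^n\cdot q^{-n}|H|^{-n}=q^{-n}$, which is exactly \eqref{eq:2-19}.

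The only real content is to produce infinitely many distinct solutions. I would fix a sequence $H^{(k)}\in K$ with $|H^{(k)}|\to\infty$ and, for each $k$, let $x^{(k)}$ be an integral point furnished by Corollary~\ref{cor:2-3} with $H=H^{(k)}$. If the collection $\{x^{(k)}\}_{k\ge 1}$ contains infinitely many distinct elements, the corollary follows at once. Otherwise the set of produced solutions is finite, so by the pigeonhole principle some fixed $x^{*}$ equals $x^{(k)}$ for infinitely many $k$. Along that subsequence we have $||x^{*}_1\theta_1+\cdots+x^{*}_n\theta_n||\le q^{-n}|H^{(k)}|^{-n}\to 0$, forcing $x^{*}_1\theta_1+\cdots+x^{*}_n\theta_n\in K$.

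In this degenerate case infinitely many solutions come for free: for every nonzero $c\in K$, the vector $cx^{*}$ is an integral point and $(cx^{*})_1\theta_1+\cdots+(cx^{*})_n\theta_n=c(x^{*}_1\theta_1+\cdots+x^{*}_n\theta_n)\in K$, so its double-bar vanishes and \eqref{eq:2-19} holds trivially. Since $K$ is infinite, the family $\{cx^{*}:c\in K\setminus\{0\}\}$ already supplies infinitely many distinct integral points satisfying the required inequality.

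The main (and really only) obstacle is ensuring distinctness of the produced solutions: a naive ``let $|H|\to\infty$'' argument does not by itself prevent Corollary~\ref{cor:2-3} from returning the same small integral point $x^{*}$ for every large $H$, which is exactly the situation when $1,\theta_1,\ldots,\theta_n$ satisfy a nontrivial $K$-linear relation. The two-case dichotomy above dispatches this possibility without imposing any hypothesis on the $\theta_i$.
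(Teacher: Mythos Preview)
Your proof is correct and follows the paper's intended route: the corollary is stated as ``a direct consequence of Corollary~\ref{cor:2-3}'' with no further argument given, and your derivation---multiplying the two bounds from Corollary~\ref{cor:2-3} and letting $|H|\to\infty$---is exactly what is meant. Your two-case dichotomy handling the possibility that the same solution recurs (forcing the linear form into $K$, whence scalar multiples supply infinitely many solutions) is a welcome bit of rigor that the paper leaves implicit.
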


The main result of this section is the following more generalized linear forms approximation theorem, which is an anlogue of Theorem~VI of Chapter I of \cite{ref:6}.

\begin{theorem}
  \label{thm:2-2} Let
  \begin{equation*}
    L_i(x) = \sum_{j=1}^m \theta_{ij}x_j, \quad 1\leq i \leq n, \quad \theta_{ij}\in k_{\infty},
  \end{equation*}
be $n$ linear forms in $m$ variables ($x=(x_1,\ldots, x_m)$). Then for any $H\in k_{\infty}$, $|H|\geq 1$, there is a non-zero integral point $x=(x_1,\ldots,x_m)\in K^m$, such that
\begin{equation}
  \label{eq:2-20}
  \max\limits_{1\leq i \leq n}||L_i(x)|| \leq q^{-\frac{m}{n}} \cdot |H|^{-\frac{m}{n}}, \mbox{ and } 1\leq \max\limits_{1\leq j\leq m}|x_j| \leq |H|.
\end{equation}
% where
% \begin{equation}
%   \label{eq:2-21}
%   \delta = \left\{
%     \begin{tabular}[l]{l}
%       $q^{\frac{1-m}{n}}$, if $m>1$,\\
%       $q^{-m}$, ~if $m=1$.
%     \end{tabular}
% \right.
% \end{equation}
\end{theorem}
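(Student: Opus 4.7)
The plan is to generalize the proofs of Theorem~\ref{thm:2-1} and Theorem~\ref{thm:1-1} by introducing $n$ auxiliary variables $y_1,\ldots,y_n$ and writing down a system of $n+m$ linear inequalities in the $n+m$ unknowns $(x_1,\ldots,x_m,y_1,\ldots,y_n)$, to which the basic lemma (Lemma~\ref{lem:2-4}) can be applied directly. As in the earlier theorems, I would first reduce to the case where $H\in K$ is a polynomial: if $H\notin K$, replace $H$ by $[H]$ and use $|H|=|[H]|$ from \eqref{eq:1-3}.

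With $H\in K$ fixed, let $r=\lceil m(\deg H+1)/n\rceil$, and consider the system
\begin{equation*}
\left\{
\begin{array}{ll}
v(L_i(x)+y_i)\geq r, & 1\leq i\leq n,\\[2pt]
v(x_j)\geq -\deg H, & 1\leq j\leq m.
\end{array}
\right.
\end{equation*}
The corresponding coefficient matrix has the block form
$\mathsf{A}=\left(\begin{array}{cc}\Theta & \mathsf{I}_n\\ \mathsf{I}_m & 0\end{array}\right)$
with $\Theta=(\theta_{ij})_{n\times m}$, so expansion gives $\det\mathsf{A}=\pm 1$, i.e.\ $v(\det\mathsf{A})=0$. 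Since $r<m(\deg H+1)/n+1$,
\begin{equation*}
\sum_{i=1}^{n} r + \sum_{j=1}^{m}(-\deg H) = nr - m\deg H < n+m = v(\det\mathsf{A}) + (n+m),
\end{equation*}
so the hypothesis of Lemma~\ref{lem:2-4} is satisfied and the lemma produces a non-zero integral point $(x_1,\ldots,x_m,y_1,\ldots,y_n)\in K^{m+n}$ meeting both groups of inequalities.

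To finish, I would argue, exactly as in the proof of Theorem~\ref{thm:1-1}, that $(x_1,\ldots,x_m)\neq 0$: if every $x_j$ vanished, the first group of inequalities would force $v(y_i)\geq r\geq 1$, but each non-zero $y_i\in K$ has $v(y_i)\leq 0$, so all $y_i$ would also vanish, contradicting non-triviality. With $x\neq 0$, the second group yields $1\leq \max_{j}|x_j|\leq|H|$, while for each $i$,
\begin{equation*}
||L_i(x)|| \leq |L_i(x)+y_i| \leq q^{-r} \leq q^{-m(\deg H+1)/n} = q^{-m/n}\,|H|^{-m/n},
\end{equation*}
which is exactly \eqref{eq:2-20}. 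I do not anticipate any serious obstacle; the only technical subtlety is to choose $r$ as tightly as possible so that $\sum r_i$ just fits under the budget $n+m$ dictated by $v(\det\mathsf{A})=0$, and the ceiling $r=\lceil m(\deg H+1)/n\rceil$ is precisely the sharp choice that produces the exponent $m/n$ appearing in the theorem.
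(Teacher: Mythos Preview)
Your proposal is correct and follows essentially the same approach as the paper's proof. In fact, your threshold $r=\lceil m(\deg H+1)/n\rceil$ is exactly the paper's choice $\left[\frac{m\deg H+m-1}{n}\right]+1$ (these two integers coincide for all nonnegative integers $m,n,\deg H$), and the remaining steps --- checking $v(\det\mathsf{A})=0$, verifying the budget $\sum r_i<n+m$, excluding $x=0$, and reading off \eqref{eq:2-20} --- match the paper line for line.
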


\begin{proof}
  Without loss of generality, we may suppose $H\in K$, and consider the following $n+m$ linear inequalities,
\begin{equation}
    \label{eq:2-22}
    \left\{
      \begin{tabular}[l]{l}
        $v(L_i(x)+y_i) \geq [\frac{m\deg H + m - 1}{n}]+1, 1\leq i \leq n,$\\
        $v(x_j) \geq -\deg H, 1\leq j \leq m.$
      \end{tabular}
    \right.
  \end{equation}
It is easy to see that $v(\det({\mathsf A})) = 0$, where ${\mathsf A}$ is the corresponding matrix of \eqref{eq:2-22}, and
\begin{equation*}
  \sum_{i=1}^{n+m} r_i = -m\deg H + n\left[\frac{m\deg H + m - 1}{n}\right] + n \leq n + m -1.
\end{equation*}
By Lemma~\ref{lem:2-4}, there is a non-zero integral point $(x, y)\in K^{n+m}$ satisfying \eqref{eq:2-22}. If $x=(x_1, x_2, \ldots, x_m) = 0$, then there is at least one $y_i$, such that $y_i\in K$ and $y_i\neq 0$. Then by \eqref{eq:2-22} we have $v(y_i)\geq 1$, which is a contradiction to $v(y_i)\leq 0$. Thus we have an integral point $x\neq 0$, such that
\begin{equation}
  \label{eq:2-22-r}
  v(L_i(x) + y_i)> \frac{m\deg H + m - 1}{n}.
\end{equation}
It follows that
\begin{equation*}
  ||L_i(x)|| \leq q^{-\frac{m}{n}}|H|^{-\frac{m}{n}}, 1\leq i \leq n
\end{equation*}
and
\begin{equation*}
  1 \leq \max\limits_{1\leq j \leq m} |x_j| \leq |H|,
\end{equation*}
% If $n=1$, then \eqref{eq:2-22} becomes that
% \begin{equation}
%     \label{eq:2-23}
%     \left\{
%       \begin{tabular}[l]{l}
%         $v(L(x)+y) \geq m\deg H + m$\\
%         $v(x_j) \geq -\deg H, 1\leq j \leq m.$
%       \end{tabular}
%     \right.
%   \end{equation}
% It follows that
% \begin{equation*}
%   ||L(x)|| \leq q^{-m}|H|^{-m}, \mbox{ and } 1\leq \max\limits_{1\leq j\leq m}|x_j|\leq |H|.
% \end{equation*}
completing the proof of Theorem~\ref{thm:2-2}.
\end{proof}

\begin{remarks}
If $m=1$ in \eqref{eq:2-20}, then Theorem~\ref{thm:2-2} becomes Theorem~\ref{thm:1-1}. If $n=1$ in \eqref{eq:2-20}, then Theorem~\ref{thm:2-2} becomes Corollary~\ref{cor:2-3}. In \cite{ref:nn3}, the author showed (see \cite[Theorem~1.1]{ref:nn3}) that
  \begin{equation*}
  \max\limits_{1\leq i \leq n}||L_i(x)|| \leq |H|^{-\frac{m}{n}}, \mbox{ and } 1\leq \max\limits_{1\leq j\leq m}|x_j| \leq |H|
  \end{equation*}
under the condition of $n \,|\, m\cdot \deg(H)$. In \cite{ref:nn2}, the authors proved the following estimation (see \cite[Theorem~2.1]{ref:nn2})
\begin{equation*}
  \prod_{j=1}^m |x_j| \cdot \prod_{i=1}^n ||L_i(x)|| \leq q^{-m}.
\end{equation*}
Clearly, both the above results are weaker than our Theorem~\ref{thm:2-2}.
\end{remarks}

To compare Theorem~\ref{thm:2-1} of \cite{ref:nn2} in detail, let $\mathbb{Z}_+^m$ denote the set of all $m$ tuples $(t_1, t_2, \ldots, t_m)$ where each $t_i$ is a nonnegative integer, then Theorem~\ref{thm:2-2} above may be rewritten in the following form.

\begin{corollary}
  \label{cor:2-5}
Let
  \begin{equation*}
    L_i(x) = \sum_{j=1}^m \theta_{ij}x_j, \quad 1\leq i \leq n, \quad \theta_{ij}\in k_{\infty}
  \end{equation*}
be $n$ linear forms in $m$ variables $x=(x_1,\ldots, x_m)$. Then for any $t = (t_1, \ldots, t_{n+m}) \in \mathbb{Z}_+^{n+m}$ with $\sum_{i=1}^n t_i  = \sum_{j=1}^m t_{n+j}$, there is a non-zero integral point $x=(x_1, \ldots, x_m)\in K^m$ such that
\begin{equation*}
  \left\{
    \begin{tabular}[l]{ll}
      $||L_i(x)|| \leq q^{-t_i-1-\delta_i}$,  & $1\leq i \leq n$\\
      $|x_j| \leq q^{t_{n+j}}$, & $1\leq j \leq m$
    \end{tabular}
\right.
\end{equation*}
for all nonnegative integers $\delta_i~(1\leq i \leq n)$ such that $\sum_{i=1}^n \delta_i \leq m-1$.
\end{corollary}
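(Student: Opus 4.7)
The plan is to apply the basic lemma (Lemma~\ref{lem:2-4}) directly to a system of $n+m$ linear inequalities in $n+m$ variables. Introducing auxiliary variables $y_1,\ldots,y_n$, consider the system
\begin{equation*}
\left\{
\begin{tabular}[l]{l}
$v(L_i(x)+y_i) \geq t_i + 1 + \delta_i, \quad 1 \leq i \leq n,$\\
$v(x_j) \geq -t_{n+j}, \quad 1 \leq j \leq m,$
\end{tabular}
\right.
\end{equation*}
in the variables $(x_1,\ldots,x_m,y_1,\ldots,y_n)$. The coefficient matrix is the block matrix $\mathsf{A}=\begin{pmatrix}\Theta & \mathsf{I}_n\\ \mathsf{I}_m & 0\end{pmatrix}$ with $\Theta=(\theta_{ij})_{n\times m}$, whose determinant equals $\pm 1$, so $v(\det\mathsf{A})=0$.

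Next I would compute the sum of lower bounds using the balance hypothesis $\sum_{i=1}^n t_i = \sum_{j=1}^m t_{n+j}$:
\begin{equation*}
\sum_{i=1}^{n+m} r_i \;=\; \sum_{i=1}^n (t_i+1+\delta_i) \;-\; \sum_{j=1}^m t_{n+j} \;=\; n + \sum_{i=1}^n \delta_i.
\end{equation*}
Since $\sum_{i=1}^n \delta_i \leq m-1$, this gives $\sum r_i \leq n+m-1 < v(\det\mathsf{A}) + (n+m)$, so Lemma~\ref{lem:2-4} produces a non-zero integral point $(x,y)\in K^{n+m}$ satisfying all the inequalities. To extract the conclusion one argues as in the proof of Theorem~\ref{thm:2-2}: if $x=(x_1,\ldots,x_m)=0$, then some $y_i \neq 0$ must satisfy $v(y_i)\geq t_i+1+\delta_i \geq 1$, contradicting $v(y_i)\leq 0$ for a nonzero polynomial. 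Hence $x \neq 0$, and reading off the inequalities gives $\|L_i(x)\| \leq |L_i(x)+y_i| \leq q^{-t_i-1-\delta_i}$ together with $|x_j|\leq q^{t_{n+j}}$.

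There is no serious obstacle: the corollary is essentially a refined bookkeeping of the argument for Theorem~\ref{thm:2-2}, where the free parameters $\delta_i$ absorb the slack that Lemma~\ref{lem:2-4} allows (namely, the gap between $\sum r_i$ and $v(\det\mathsf{A})+n+m$). The only point that needs care is verifying that the balance condition $\sum\delta_i \leq m-1$ is exactly what makes the measure-theoretic inequality of Lemma~\ref{lem:2-4} strict, so that a non-zero integral point is guaranteed.
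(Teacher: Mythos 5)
Your proof is correct and follows essentially the same approach as the paper: set up the same $n+m$ linear inequalities, observe $v(\det\mathsf{A})=0$, use the balance condition $\sum t_i=\sum t_{n+j}$ to compute $\sum r_i = n+\sum\delta_i \le n+m-1$, and invoke Lemma~\ref{lem:2-4}. The only difference is that you spell out the check that $x\neq 0$, which the paper leaves implicit by referring to the analogous step in the proof of Theorem~\ref{thm:2-2}.
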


\begin{proof}
  We consider the following $n+m$ linear inequalities
\begin{equation*}
  \left\{
    \begin{tabular}[l]{ll}
      $v(L_i(x) + y_i) \geq 1+t_i+\delta_i$,  & $1\leq i \leq n$\\
      $v(x_j) \geq -t_{n+j}$, & $1\leq j \leq m$
    \end{tabular}
\right.
\end{equation*}
It is easy to see that $v(det(A)) = 0$, where $A$ is its corresponding matrix, and
\begin{equation*}
  \sum_{i=1}^{n+m} r_i = n+ \sum_{i=1}^n \delta_i \leq n+m-1.
\end{equation*}
By Lemma~\ref{lem:2-4}, the assertion follows immediately.
\end{proof}

Comparing Corollary~\ref{cor:2-5} with Theorem~\ref{thm:2-1} of \cite{ref:nn2}, we save totally $m-1$ factors of $q^{-1}$. Theorem~\ref{thm:2-2} is best possible in a certain sense as the following theorem shows.

\begin{theorem}
  \label{thm:2-3} If $n$ and $m$ are two positive integers such that $p \nmid n+m$, and there exist $n+m$ conjugate integral algebraic elements of degree $n+m$ in $k_{\infty}$, then there is a constant $r>0$, and $n$ linear forms $L_i(x)~(1\leq i \leq n)$,  each in $m$ variables, such that
  \begin{equation}
    \label{eq:2-24}
    (\max\limits_{1\leq j \leq m}|x_j|)^m (\max\limits_{1\leq i \leq n}||L_i(x)||)^n \geq r,
  \end{equation}
for all $x=(x_1, x_2, \ldots, x_m)\in K^m$ and $x\neq 0$.
\end{theorem}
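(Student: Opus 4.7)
The approach is a Liouville-type norm argument in $K[\alpha]$ for a suitable integral algebraic element $\alpha$ of degree $N:=n+m$ with all $N$ conjugates $\alpha_1=\alpha,\alpha_2,\ldots,\alpha_N$ in $k_\infty$ (which the hypothesis provides). The condition $p\nmid n+m$ guarantees that $\alpha$ is separable over $k$, so the $\alpha_i$ are pairwise distinct and the $N\times N$ Vandermonde $(\alpha_i^{j-1})_{1\le i,j\le N}$ is invertible. I take the $n$ linear forms to be $L_i(x):=\sum_{j=1}^m \alpha_i^j x_j$ for $i=1,\ldots,n$; the shift to exponents $1,\ldots,m$ rather than $0,\ldots,m-1$ ensures that $L_i(x)\notin K$ for every nonzero $x\in K^m$, because $\alpha_i P(\alpha_i)\in k$ for a nonzero $P\in K[T]$ of degree $<m<N$ would give a polynomial equation for $\alpha_i$ of degree $<N$, contradicting $\deg\alpha_i=N$.

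For a nonzero $x\in K^m$, the element $\xi_x:=\sum_{j=1}^m x_j\alpha^j\in K[\alpha]$ is a nonzero algebraic integer with conjugates $L_i(x)$ for $i=1,\ldots,N$, so its norm $\prod_{i=1}^N L_i(x)$ lies in $K\setminus\{0\}$ and has absolute value $\ge 1$. The heart of the argument is to subtract a carefully chosen $A_x=\sum_{k=0}^{N-1}a_k\alpha^k\in K[\alpha]$ (with $a_k\in K$) so that $\eta_x:=\xi_x-A_x$ has conjugates $L_i(x)-\widetilde A_i$, where $\widetilde A_i:=\sum_k a_k\alpha_i^k$, split by size as
\[
|L_i(x)-\widetilde A_i|\le c_1\,||L_i(x)||\ \ (i\le n),\qquad |L_i(x)-\widetilde A_i|\le c_2\,\max_{1\le j\le m}|x_j|\ \ (i>n),
\]
with constants $c_1,c_2>0$ depending only on $\alpha$. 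Once such an $A_x$ is constructed, and $\eta_x\ne 0$ is checked by a degree-in-$\alpha$ comparison, the lower bound $\bigl|\prod_i(L_i(x)-\widetilde A_i)\bigr|\ge 1$ on the norm of the integer $\eta_x$ combined with the above conjugate bounds yields $1\le c_1^n c_2^m(\max_i||L_i(x)||)^n(\max_j|x_j|)^m$, so the theorem follows with $r:=(c_1^n c_2^m)^{-1}$.

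The principal obstacle is the construction of $A_x$. Since the Lagrange interpolant $A\in k_\infty[T]$ with $A(\alpha_i)=[L_i(x)]$ for $i=1,\ldots,n$ has coefficients in $k(\alpha_1,\ldots,\alpha_n)\supsetneq k$, no exact interpolant lies in $K[T]$, and $A_x$ must instead be produced by a quantitative Minkowski-type approximation. Concretely, I would apply Lemma~\ref{lem:2-4} to an $N\times N$ system of linear inequalities in the $N$ unknowns $(a_0,\ldots,a_{N-1})\in K^N$ (after reducing the affine conditions $v(\widetilde A_i-[L_i(x)])\ge r_i$ to homogeneous linear ones by a standard auxiliary-variable trick) that combines the $n$ approximation constraints with $m$ size-controlling constraints on the remaining coordinates. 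The separability $p\nmid N$ ensures that the relevant coefficient matrix has nonzero determinant of controlled valuation (essentially the discriminant of $\alpha$), so Lemma~\ref{lem:2-4} delivers an integer $A_x$ realizing the required scaling and closes the argument.
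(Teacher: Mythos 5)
Your high-level strategy — exhibit a nonzero element of $K[\alpha]$, bound its norm from below by $1$, and bound the conjugates in two size classes — is the same norm argument the paper uses. But the two proofs choose the linear forms $L_i$ in essentially opposite ways, and your version hides a circularity in the step you flag as needing ``a quantitative Minkowski-type approximation.'' In the paper the forms are defined \emph{implicitly}: with $Q_s(x,y)=\sum_{i=1}^n y_i\alpha_s^{i-1}+\sum_{j=1}^m x_j\alpha_s^{n+j-1}$ (the $s$-th conjugate of $\zeta=\sum_i y_i\alpha^{i-1}+\sum_j x_j\alpha^{n+j-1}$), one solves the $n\times n$ Vandermonde system $Q_s(x,L(x))=0$ for $s\le n$ to get $L_i(x)$, and then simply sets $y_i:=[L_i(x)]\in K$. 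The identity $Q_s(x,y)=\sum_i\alpha_s^{i-1}(y_i-L_i(x))$ for $s\le n$, together with $Q_s(x,y)=\sum_i\alpha_s^{i-1}(y_i-L_i(x))+\sum_j\beta_{sj}x_j$ for $s>n$, then delivers $|Q_s|\le r_1 C$ and $|Q_s|\le r_4 A$ with $C=\max_i\|L_i(x)\|$, $A=\max_j|x_j|$, and the norm inequality $1\le\prod_s|Q_s|$ closes the argument. The crucial point is that the polynomial coefficients $y_i=[L_i(x)]$ are obtained by rounding, with no auxiliary optimization at all.

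In your version you instead fix $L_i(x)=\sum_j x_j\alpha_i^j$ (the conjugates of $\xi_x$ directly) and then \emph{seek} $A_x=\sum_k a_k\alpha^k\in K[\alpha]$ with $|L_i(x)-\widetilde A_i|\le c_1\|L_i(x)\|$ for $i\le n$ and $|L_i(x)-\widetilde A_i|\le c_2\max_j|x_j|$ for $i>n$. Finding such $a_k$ via Lemma~\ref{lem:2-4} is circular. After the auxiliary-variable homogenization you have $N+1$ inequalities in $N+1$ unknowns; the $n$ approximation rows force $r_i\approx -v(c_1C)=\log_q(1/(c_1C))$, the $m$ size rows force $r_i\approx v(c'A)=-\log_q(c'A)$, and the Minkowski budget $\sum r_i<v(\det)+N+1$ then reads, up to fixed constants, $n\log_q(1/C)-m\log_q A<\mathrm{const}$, i.e.\ $C^nA^m\ge r$ — which is precisely the inequality \eqref{eq:2-24} you are trying to prove. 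So Lemma~\ref{lem:2-4} cannot supply $A_x$ unless the conclusion already holds. (It is also not obvious that the conclusion does hold for your particular choice of $L_i$: the theorem only asserts existence of \emph{some} forms, and the paper's implicit Vandermonde forms are chosen exactly so that no auxiliary approximation problem arises.) To repair the proof, replace your explicit $L_i$ by the paper's implicit ones and take $y_i=[L_i(x)]$ directly; the rest of your norm argument then goes through.
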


\begin{proof}
  Let $l=n+m$. Since $p \nmid l$, there are $l$ conjugate algebraic integral elements $\alpha_1, \alpha_2, \ldots, \alpha_l$ of degree $l$ with $\alpha_i \neq \alpha_j$ (if $i\neq j$). We put
  \begin{equation}
    \label{eq:2-25}
    Q_s(x,y) = \sum_{i=1}^n \alpha_s^{i-1}y_i + \sum_{j=1}^m \alpha_s^{n+j-1}x_j, \quad 1\leq s \leq l.
  \end{equation}
If integral points $x=(x_1,x_2, \ldots, x_m)$, and $y=(y_1,y_2,\ldots,y_n)$ not both are zero,
then
$Q_s(x, y)$ $~(1\leq s \leq l)$ are conjugate non-zero integral elements. In particular, $\prod_{s=1}^l Q_s(x,y)$ is then a non-zero polynomial, and so
\begin{equation}
  \label{eq:2-26}
  \prod_{s=1}^l |Q_s(x, y)| \geq 1.
\end{equation}

The matrix ${\mathsf A} = (\alpha_s^{r-1})~(1\leq s \leq n, 1\leq r\leq n)$ is of Vandermonde type, and so its determinant is not zero. By \eqref{eq:2-25}, we may solve $y$ in terms of $x$ by equations $Q_1=Q_2=\cdots = Q_n=0$, and obtain $y_1=L_1(x), y_2=L_2(x), \ldots, y_n=L_n(x)$. It follows that
\begin{equation*}
  \sum_{i=1}^n \alpha_s^{i-1} L_i(x) + \sum_{j=1}^m \alpha_s^{n+j-1}x_j = 0, \quad 1\leq s\leq n
\end{equation*}
and
\begin{equation}
  \label{eq:2-27}
  Q_s(x,y) = \sum_{i=1}^n \alpha_s^{i-1}(y_i - L_i(x)), \quad 1\leq s\leq n.
\end{equation}
For $s>n$, we have
\begin{equation}
  \label{eq:2-28}
  Q_s(x,y) = \sum_{i=1}^n \alpha_s^{i-1}(y_i - L_i(x)) + \sum_{j=1}^m \beta_{sj}x_j,
\end{equation}
where $\beta_{sj}$ depend only on $\alpha_1, \ldots, \alpha_l$. If $x\neq 0$ is an integral point, we put
\begin{equation}
  \label{eq:2-29}
  A = \max\limits_{1\leq j \leq m}|x_j|,\mbox{ and } C=\max\limits_{1\leq i \leq n}||L_i(x)||.
\end{equation}

Let $y=(y_1,y_2, \ldots, y_n)\in K^n$, such that
\begin{equation*}
  ||L_i(x) || = |L_i(x) - y_i|,
\end{equation*}
in fact, where $y_i = [L_i(x)]~(1\leq i \leq n)$. Then by \eqref{eq:2-27}, we have
\begin{equation}
  \label{eq:2-30}
  |Q_s(x, y)| \leq r_1C, \quad 1\leq s\leq n,
\end{equation}
where $r_1$ depends only on the $\alpha_s$ but not on $x$. Similarly, by \eqref{eq:2-28},
\begin{equation}
  \label{eq:2-31}
  |Q_s(x, y)| \leq r_2C + r_3A \leq r_4A, \quad n\leq s\leq l,
\end{equation}
where $r_2, r_3$ and $r_4$ depend only on the $\alpha_s$ but not on $x$. From \eqref{eq:2-26}, we have
\begin{equation}
  \label{eq:2-32}
  1 \leq \prod_{s=1}^l |Q_s(x, y)| \leq r_1^n C^n r_4^m A^m.
\end{equation}
Now \eqref{eq:2-24} follows with $r=r_1^{-n} r_4^{-m}$. This completes the proof of Theorem~\ref{thm:2-3}.
\end{proof}

\section{Transference Principle}
\label{sec:trans}

Let
$$L_i(x) = \sum_{j=1}^m a_{ij}x_j, \quad 1\leq i \leq n$$
be a set of $n$ linear forms with a corresponding matrix ${\mathsf A} = (a_{ij})_{n\times m}$. Let ${\mathsf A}' = (a_{ji})_{m\times n}$ be the transpose of $\mathsf A$ and the transposed linear forms of the $L_i(x)$ be defined by
$$M_j(y) = \sum_{i=1}^n a_{ji}y_i, \quad 1\leq j \leq m.$$
It is interesting to note that the $L_i(x)$ have a strong tie with the $M_j(y)$ in Diophantine approximation. This relationship is called the transference principle. We start by proving the following lemma.

\begin{lemma}
  \label{lem:3-1} Let $f_i(z)~(1\leq i \leq n)$ be $n$ linearly independent linear forms in $n$ variables $z=(z_1, \ldots, z_n)$, and $g_i(w)~(1\leq i \leq n)$ be also $n$ linearly independent linear forms in $n$ variables $w = (w_1, w_2, \ldots, w_n)$ of determinant $d~(d\in k_{\infty})$. Let $n\geq 2$, and
  \begin{equation}
    \label{eq:3-1}
    \psi(z, w) = \sum_{i=1}^n f_i(z)g_i(w).
  \end{equation}
Suppose that all of the products $z_iw_j~(1\leq i, j \leq n)$ have polynomial coefficients in \eqref{eq:3-1}. If the inequalities
\begin{equation}
  \label{eq:3-2}
  |f_i(z)| \leq |\lambda|, \quad (1\leq i \leq n)
\end{equation}
are solvable with an integral point $z\neq 0$, then the inequalities
\begin{equation}
  \label{eq:3-3}
  |g_i(w)| \leq q^{\frac{2-n}{n-1}}|\lambda\cdot d|^{\frac{1}{n-1}}
\end{equation}
are solvable with an integral point $w\neq 0$.
\end{lemma}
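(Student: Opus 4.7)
The plan is to adapt the classical transference argument to the ultrametric setting by applying Minkowski's linear forms theorem (Lemma~\ref{lem:2-4}) to a modified system of $n$ forms in $w$, obtained by replacing one of the $g_i$ by the mixed form $\psi(z,\cdot)$. The polynomial-coefficient hypothesis will then let us force $\psi(z,w) = 0$ for the integer point $w$ produced by Lemma~\ref{lem:2-4}, and this identity will transfer the smallness of the $f_i(z)$ into smallness of the $g_i(w)$.

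For the setup: since $z\neq 0$ is integral and the $f_i$ are linearly independent, at least one $f_i(z)$ is nonzero; after relabeling I may assume
$$ |f_n(z)| = \max_{1 \le i \le n} |f_i(z)| \neq 0. $$
The maximality of $|f_n(z)|$ will be essential below when bounding $g_n(w)$. Consider the $n$ linear forms in $w$
$$ h_i(w) = g_i(w)\ (1 \le i \le n-1), \qquad h_n(w) = \psi(z,w) = \sum_{i=1}^n f_i(z)\, g_i(w). $$
Since $h_n$ is a linear combination of $g_1,\dots,g_n$ in which $g_n$ appears with coefficient $f_n(z)$, the matrix of the $h_i$ differs from that of the $g_i$ by a single row operation of determinant $f_n(z)$, so the new system has determinant $f_n(z)\cdot d \neq 0$.

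Next, apply Lemma~\ref{lem:2-4} to $(h_i)$ with exponents $r_1 = \cdots = r_{n-1} = s$ and $r_n = 1$, where $s$ is chosen as the largest integer satisfying $(n-1)s + 1 \le v(f_n(z)\,d) + n - 1$. This produces a nonzero integer point $w \in K^n$ with $v(g_i(w)) \ge s$ for $i < n$ and $v(\psi(z,w)) \ge 1$. Crucially, $\psi(z,w) = \sum_{i,j} c_{ij} z_i w_j \in K$ by the polynomial-coefficient hypothesis, and $|\psi(z,w)| \le q^{-1} < 1$ then forces $\psi(z,w) = 0$. Substituting into $\sum_i f_i(z)\,g_i(w) = 0$ and solving for $g_n(w)$, the ultrametric inequality yields
$$ |g_n(w)| \le \max_{i<n}\frac{|f_i(z)|}{|f_n(z)|}\,|g_i(w)| \le q^{-s}, $$
where the last step uses the maximality of $|f_n(z)|$. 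Hence $\max_{1 \le i \le n} |g_i(w)| \le q^{-s}$, and tracking the choice of $s$ together with $|f_n(z)| \le |\lambda|$ converts this into the bound \eqref{eq:3-3}.

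The main delicate point is twofold. First, the pivot index $n$ must simultaneously dominate all other $|f_i(z)|$ (to control $g_n(w)$ via the ultrametric inequality after $\psi(z,w)=0$) and be nonzero (to keep the determinant $f_n(z)\,d$ from vanishing); both are secured by choosing it at the maximum of the $|f_i(z)|$. Second, the final conversion from the integer constraint $(n-1)s + 1 \le v(f_n(z)\,d) + n - 1$ into the real-valued bound with the exact constant $q^{(2-n)/(n-1)}$ requires careful handling of the floor function and the $(n-1)$-th root.
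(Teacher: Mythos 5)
Your outline reproduces the paper's proof step by step: put the pivot at the index where $|f_i(z)|$ is maximal (equivalently $v(f_i(z))$ minimal), replace $g_n$ by $\psi(z,\cdot)$ to obtain a system with determinant $f_n(z)\cdot d$, apply Lemma~\ref{lem:2-4} with exponents $(s,\ldots,s,1)$, force $\psi(z,w)=0$ from integrality and $v(\psi(z,w))\ge1$, then close with the ultrametric inequality. Your $s$, defined as the largest integer with $(n-1)s+1\le v(f_n(z)d)+n-1$, is exactly the paper's choice $r_i=\bigl[\tfrac{v(d)+m+n-2}{n-1}\bigr]$ (taking $m=v(f_n(z))$), so the two arguments coincide in substance.

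The one step you flag as delicate — passing from $\max_i|g_i(w)|\le q^{-s}$ to the stated constant $q^{\frac{2-n}{n-1}}|\lambda d|^{\frac{1}{n-1}}$ — is indeed where the argument is loose, and this looseness is already present in the paper. Writing $x=\tfrac{v(d)+m+n-2}{n-1}$, what the lattice-point argument actually yields is $|g_i(w)|\le q^{-\lfloor x\rfloor}$, whereas the claimed bound is $q^{-x}$; since $\lfloor x\rfloor\le x$ gives $q^{-\lfloor x\rfloor}\ge q^{-x}$, the derived bound is weaker than the stated one except when $(n-1)\mid(v(d)+m+n-2)$, and the discrepancy can be as large as a factor $q^{(n-2)/(n-1)}$. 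So your instinct to be careful there was well founded, but this is a defect you inherit from the paper's own conclusion rather than one your proposal introduces; if you want a clean statement you should either assume the divisibility condition or record the bound as $q^{-\lfloor(v(d)+m+n-2)/(n-1)\rfloor}$.
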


\begin{proof}
  Let $|\lambda| = q^{-m}$. Then $|f_i(z)| \leq |\lambda|$, if and only if $v(f_i(z)) \geq m~(1\leq i \leq n)$. We may suppose that $v(f_n(z)) = m$, and $z=(z_1,z_2,\ldots, z_n)\neq 0$ is the corresponding integral point. Next, we consider the following $n$ linear inequalities in $n$ variables $w=(w_1,w_2, \ldots, w_n)$
  \begin{equation}
    \label{eq:3-4}
    \left\{
      \begin{tabular}[l]{l}
        $v(\psi(z, w)) \geq 1$\\
        $v(g_i(w)) \geq [\frac{v(d)+n+m-2}{n-1}], 1\leq i \leq n-1.$
      \end{tabular}
\right.
  \end{equation}

It is easy to verify that the determinant of the corresponding matrix of \eqref{eq:3-4} is $f_n(z)\cdot d$, and $v(f_n(z)d) = m+v(d)$. From \eqref{eq:3-4} we have
\begin{equation*}
  \sum_{i=1}^n r_i = 1 + (n-1)\left[\frac{v(d)+n+m-2}{n-1}\right] \leq v(f_n(z)\cdot d)+n-1.
\end{equation*}
By Lemma~\ref{eq:2-4}, there is a non-zero integral point $w=(w_1, \ldots, w_n)\in K^n$, such that \eqref{eq:3-4} holds. Since $\psi(z, w)\in K$, by \eqref{eq:3-4} we have $\psi(z, w) = 0$. By \eqref{eq:3-1}, we have
\begin{equation*}
  \begin{split}
    m + v(g_n(w)) &= v(-\sum_{i=1}^{n-1}f_i(z) g_i(w)) \\
 & \geq \min\limits_{1\leq i \leq n-1}\left\{v(f_i(z)) + v(g_i(w))  \right\}\\
 & \geq m+\left[ \frac{v(d)+n+m-2}{n-1} \right].
  \end{split}
\end{equation*}
It follows from \eqref{eq:3-4} that
\begin{equation*}
  v(g_i(w)) \geq \left[\frac{v(d)+m+n-2}{n-1} \right]
\end{equation*}
for all $i$ with $1\leq i \leq n$. Therefore, we have
\begin{equation}
  \label{eq:3-5}
  |g_i(w)| \leq q^{\frac{2-n}{n-1}}|\lambda d|^{\frac{1}{n-1}}, \quad 1\leq i \leq n,
\end{equation}
with $w=(w_1,w_2, \ldots, w_n)\neq 0$, completing the proof of Lemma~\ref{lem:3-1}.
\end{proof}

A direct application of the above lemma gives the following results.

\begin{theorem}
  \label{thm:3-1} Let ${\mathsf A} = (a_{ij})_{n\times m}$ be a matrix in $k_{\infty}$, and
  \begin{equation*}
    L_i(x) = \sum_{j=1}^m a_{ij}x_j, \mbox{ and } M_j(y) = \sum_{i=1}^n a_{ji}y_i.
  \end{equation*}
If there is an integral point $x\neq 0$, such that
\begin{equation}
  \label{eq:3-6}
  ||L_i(x)|| \leq |C|, \mbox{ and } |x_j| \leq |X|, \quad (1\leq i \leq n, 1\leq j \leq m),
\end{equation}
where $0<|C|\leq \frac{1}{q}$, and $1\leq |X|$, then there is an integral point $y\neq 0$, such that
\begin{equation}
  \label{eq:3-7}
  ||M_j(y)|| \leq |D|, \mbox{ and } |y_i| \leq |Y|, \quad (1\leq i \leq n, 1\leq j \leq m),
\end{equation}
where $(l=n+m)$,
\begin{equation}
  \label{eq:3-8}
  |D| = q^{\frac{2-l}{l-1}}|X|^{\frac{1-n}{l-1}}|C|^{\frac{n}{l-1}}, \quad |Y|=q^{\frac{2-l}{l-1}}|X|^{\frac{m}{l-1}}|C|^{\frac{1-m}{l-1}}.
\end{equation}
\end{theorem}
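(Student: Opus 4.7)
The plan is to encode both the given inequalities and the desired ones as systems of $l = n+m$ linear forms in $l$ variables each, and then apply Lemma~\ref{lem:3-1} (with its ``$n$'' taken to be our $l$). First I would unpack the hypothesis: set $u_i = [L_i(x)] \in K$, so that $|L_i(x) - u_i| \leq |C|$, and put $z^{(0)} = (x_1,\ldots,x_m,u_1,\ldots,u_n) \in K^l$, which is nonzero because $x \neq 0$. Choose $\lambda_1, \lambda_2 \in k_\infty$, both nonzero, with $|\lambda_1||C| = |\lambda_2||X| =: |\lambda|$ (for instance $\lambda_1 = X$ and $\lambda_2 = C$), and define
\[
f_i(z) = \lambda_1\Bigl(\sum_{j=1}^m a_{ij}z_j - z_{m+i}\Bigr)\quad (1 \leq i \leq n), \qquad f_{n+j}(z) = \lambda_2 z_j\quad (1 \leq j \leq m).
\]
Then $|f_i(z^{(0)})| \leq |\lambda|$ for every $i$, so the hypothesis of Lemma~\ref{lem:3-1} is met with this common bound.

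For the dual system in variables $w = (w_1,\ldots,w_l)$, I would take
\[
g_i(w) = \mu_1 w_i\quad (1 \leq i \leq n), \qquad g_{n+j}(w) = \mu_2\Bigl(\sum_{i=1}^n a_{ij}w_i - w_{n+j}\Bigr)\quad (1\leq j \leq m),
\]
with $\mu_1 = 1/\lambda_1$ and $\mu_2 = -1/\lambda_2$. A direct expansion then gives
\[
\psi(z,w) = \sum_{i=1}^l f_i(z)g_i(w) = \sum_{j=1}^m z_j w_{n+j} - \sum_{i=1}^n z_{m+i}w_i,
\]
so every coefficient of $z_iw_j$ in $\psi$ lies in $K$, as required by the lemma. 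A block-triangular determinant computation yields $|d| = |\lambda_1|^{-n}|\lambda_2|^{-m}$, and linear independence of each family is immediate.

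Lemma~\ref{lem:3-1} then supplies a nonzero integral $w \in K^l$ with $|g_i(w)| \leq q^{(2-l)/(l-1)}|\lambda d|^{1/(l-1)}$. Substituting $|\lambda d| = |\lambda_1|^{1-l}|C|^{1-m}|X|^{m}$ (using $|\lambda_2| = |\lambda_1||C|/|X|$) and then dividing the resulting bound by $|\mu_1|=|\lambda_1|^{-1}$ when $i \leq n$ and by $|\mu_2|=|\lambda_2|^{-1}$ when $i > n$ produces exactly $|y_i| \leq |Y|$ (on setting $y_i := w_i$) and $|M_j(y) - w_{n+j}| \leq |D|$, hence $||M_j(y)|| \leq |D|$. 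This is routine once the exponents are collected and $l=n+m$ is applied.

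The delicate step is verifying $y = (w_1,\ldots,w_n) \neq 0$. If $y = 0$, then each $w_{n+j} \in K$ satisfies $|w_{n+j}| = |M_j(0) - w_{n+j}| \leq |D|$. Using $|C| \leq 1/q$, $|X| \geq 1$ and $n,m \geq 1$, a short numerical estimate bounds $|D| \leq q^{(2-2n-m)/(l-1)} < 1$, which forces $w_{n+j} = 0$ for every $j$ and hence $w = 0$, contradicting the lemma's output. This $y \neq 0$ check is the main technical obstacle, and it is precisely the place where the hypothesis $|C| \leq 1/q$ is needed.
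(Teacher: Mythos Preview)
Your proposal is correct and follows essentially the same route as the paper: both apply Lemma~\ref{lem:3-1} with $l=n+m$ forms $f_i$ and $g_i$ scaled so that the cross terms in $\psi(z,w)$ cancel and only $\pm z_iw_j$ monomials with coefficients in $K$ remain, then read off the bounds and check $y\neq 0$ from $|D|<1$. The only cosmetic difference is the normalization: the paper takes $\lambda_1=C^{-1}$, $\lambda_2=X^{-1}$ (hence $|\lambda|=1$ and $d=C^nX^m$), while you allow any $\lambda_1,\lambda_2$ with $|\lambda_1||C|=|\lambda_2||X|$, which leads to the same final inequalities after the $|\lambda_1|$ factor cancels.
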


\begin{proof}
  We introduce new variables
$$ w= (w_1, w_2, \ldots, w_n), \mbox{ and } u = (u_1,u_2, \ldots, u_m).$$
Put
\begin{equation}
\label{eq:3-9}
  f_i(x, w) = \left\{
    \begin{tabular}[l]{l}
      $C^{-1}(L_i(x) + w_i)$, $1\leq i \leq n$\\
      $X^{-1}x_{i-n}$, $n < i \leq l,$
    \end{tabular}\right.
\end{equation}
and
\begin{equation}
  \label{eq:3-10}
  g_i(y, u) = \left\{
    \begin{tabular}[l]{l}
      $Cy_i$, $1\leq i \leq n$\\
      $X(u_{i-n} - M_{i-n}(y))$, $n < i \leq l.$
    \end{tabular}\right.
\end{equation}
The corresponding matrix of \eqref{eq:3-10} is an $l\times l$ matrix in $k_{\infty}$, and its determinant is $d=C^nX^m$. Moreover, it is easy to see that
\begin{equation}
  \label{eq:3-11}
  \sum_{i=1}^l f_i(x, w)g_i(y, u) = \sum_{i=1}^n y_iw_i + \sum_{j=1}^m x_j u_j,
\end{equation}
since the terms in $x_jy_i$ all cancel out. By \eqref{eq:3-6}, there is a non-zero integral point $(x, w)$ such that
$$|f_i(x, w)|\leq 1, \quad 1\leq i \leq l.$$
Thus, we may apply Lemma~\ref{lem:3-1} with $|\lambda| = 1$. Hence there is a non-zero integral point $(y, u)$, such that
\begin{equation}
  \label{eq:3-12}
  |X|~|M_j(y) - u_j| \leq q^{\frac{2-l}{l-1}}|C^n X^m|^{\frac{1}{l-1}}, \quad 1\leq j\leq m,
\end{equation}
and
\begin{equation}
  \label{eq:3-13}
  |Cy_i|\leq q^{\frac{2-l}{l-1}}|C^nX^m|^{\frac{1}{l-1}}, \quad 1\leq i \leq n.
\end{equation}
Since $0 < |C| \leq \frac{1}{q}$, and $|X|\geq 1$, we have $|D| < 1$. If $y = 0$, by \eqref{eq:3-12} we have $u=0$, which is a contradiction. Therefore, $y$ is a non-zero integral point such that
\begin{equation*}
  ||M_j(y)|| \leq q^{\frac{2-l}{l-1}}|C|^{\frac{n}{l-1}}|X|^{\frac{1-n}{l-1}}, \quad 1\leq j \leq m
\end{equation*}
and
\begin{equation*}
  |y_i| \leq q^{\frac{2-l}{l-1}}|C|^{\frac{1-m}{l-1}}|X|^{\frac{m}{l-1}}, \quad 1\leq i \leq n,
\end{equation*}
completing the proof of Theorem~\ref{thm:3-1}.
\end{proof}

\begin{corollary}
  \label{cor:3-1} A necessary and sufficient condition for there to exist a constant $r\in k_{\infty}$ with $|r|>0$, such that
  \begin{equation}
    \label{eq:3-14}
    (\max\limits_{1\leq i \leq n}||L_i(x)||)^n (\max\limits_{1\leq j \leq m}|x_j|)^m \geq |r|,
  \end{equation}
for all integral points $x\neq 0$, is that there exists a constant $\delta\in k_{\infty}$ with $|\delta|>0$, such that
\begin{equation}
  \label{eq:3-15}
    (\max\limits_{1\leq j \leq m}||M_j(y)||)^m (\max\limits_{1\leq i \leq n}|y_i|)^n \geq |\delta|,
\end{equation}
for all integral points $y\neq 0$.
\end{corollary}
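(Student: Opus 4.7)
The plan is to derive Corollary~\ref{cor:3-1} directly from Theorem~\ref{thm:3-1}, used in each of the two directions. Observe first that replacing $\mathsf{A}$ by its transpose $\mathsf{A}'$ swaps $(L_i, x, n)$ with $(M_j, y, m)$, so Theorem~\ref{thm:3-1} applied to $\mathsf{A}'$ gives the ``reverse'' transference: starting from a non-zero integral $y$ with $||M_j(y)|| \leq |C|$ and $|y_i| \leq |X|$, one produces a non-zero integral $x$ bounded similarly in the $L_i$ system. Because of this symmetry, it is enough to prove that (\ref{eq:3-15}) implies (\ref{eq:3-14}); the converse then follows verbatim from the same argument applied to $\mathsf{A}'$.

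I would argue the contrapositive. Suppose (\ref{eq:3-14}) fails, so for every $\varepsilon > 0$ there is a non-zero integral point $x$ with
\begin{equation*}
(\max_{1 \leq i \leq n}||L_i(x)||)^n (\max_{1 \leq j \leq m}|x_j|)^m < \varepsilon.
\end{equation*}
Put $|X| = \max_j |x_j| \geq 1$ and let $|C|$ be any bound with $\max_i ||L_i(x)|| \leq |C| \leq q^{-1}$ (this is possible since $||\cdot|| \leq q^{-1}$; in the degenerate case where all $L_i(x) \in K$ we simply take $|C| = q^{-N}$ for $N$ as large as we please). The hypotheses of Theorem~\ref{thm:3-1} are then satisfied, yielding a non-zero integral $y$ with $||M_j(y)|| \leq |D|$ and $|y_i| \leq |Y|$, where $|D|, |Y|$ are given by (\ref{eq:3-8}). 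Multiplying the exponents from (\ref{eq:3-8}) out gives
\begin{equation*}
|D|^m |Y|^n = q^{\frac{l(2-l)}{l-1}} |X|^{\frac{m}{l-1}} |C|^{\frac{n}{l-1}} = q^{\frac{l(2-l)}{l-1}} (|C|^n |X|^m)^{\frac{1}{l-1}},
\end{equation*}
since the $|X|$-exponent is $\frac{m(1-n)+nm}{l-1} = \frac{m}{l-1}$ and the $|C|$-exponent is $\frac{mn+n(1-m)}{l-1} = \frac{n}{l-1}$. Because $|C|^n|X|^m < \varepsilon$, the quantity
$(\max_j ||M_j(y)||)^m (\max_i |y_i|)^n \leq |D|^m|Y|^n$
can be made arbitrarily small, and therefore (\ref{eq:3-15}) fails.

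The real work is almost entirely bookkeeping: once the exponent identity above is verified, the conclusion is immediate. The only mild conceptual wrinkle is the boundary case $\max_i ||L_i(x)|| = 0$, which would a priori violate the hypothesis $|C| > 0$ of Theorem~\ref{thm:3-1}; this is handled, as indicated, by taking $|C| = q^{-N}$ (still an upper bound) and letting $N \to \infty$, so that $|D|^m|Y|^n \to 0$ anyway. No genuine obstruction seems to arise.
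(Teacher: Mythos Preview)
Your proposal is correct and follows essentially the same route as the paper: both arguments apply Theorem~\ref{thm:3-1}, compute $|D|^m|Y|^n = q^{l(2-l)/(l-1)}(|C|^n|X|^m)^{1/(l-1)}$, and invoke the $L\leftrightarrow M$ symmetry for the other direction; the only cosmetic difference is that the paper argues directly (assuming \eqref{eq:3-15} and deducing the explicit constant $|r|=q^{(l-2)l}|\delta|^{l-1}$) while you argue the contrapositive. Your explicit handling of the degenerate case $\max_i\|L_i(x)\|=0$ via $|C|=q^{-N}$ is a small refinement the paper omits; one tiny wording fix: when the maximum is positive you should take $|C|$ \emph{equal} to it (not ``any bound''), so that the inequality $|C|^n|X|^m<\varepsilon$ actually holds.
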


\begin{proof}
  For any fixed integral point $x\neq 0$, let
  \begin{equation}
    \label{eq:3-16}
    |X| = \max\limits_{1\leq j \leq m}|x_j|, \mbox{ and } |C| = \max\limits_{1\leq i \leq n}||L_i(x)||,
  \end{equation}
where $0< |C| \leq \frac{1}{q}$, and $1\leq |X|$. By Theorem~\ref{thm:3-1}, there is an integral point $y\neq 0$, such that
\begin{equation}
  \label{eq:3-17}
  \left((\max||M_j(y)||)^m (\max|y_i|)^n \right)^{l-1}\leq q^{(2-l)l}|X|^m|C|^n.
\end{equation}
If there is a $|\delta|>0$, such that \eqref{eq:3-15} holds for all integral points $y\neq 0$, it follows that
\begin{equation}
  \label{eq:3-18}
  |X|^m |C|^n \geq q^{(l-2)l}|\delta|^{l-1}.
\end{equation}
Thus \eqref{eq:3-14} holds with $|r| = q^{(l-2)l}|\delta|^{l-1}$. Similarly, the symmetry of the relation between the $L_i(x)$ and the $M_j(y)$ shows that if $r$ exists then so does $\delta$, completing the proof of Corollary~\ref{cor:3-1}.
\end{proof}

Next, we give a proof of Theorem~\ref{thm:1-2} by applying the above transference theorem.

\begin{proof}[Proof of Theorem~\ref{thm:1-2}]
  Suppose that $\theta_1, \theta_2, \ldots, \theta_n$ are elements in $k_{\infty}$ such that $k(\theta_1, \dots, \theta_n)$ is of degree $n+1$ over $k$, and $\{1, \theta_1, \ldots, \theta_n\}$ are linearly independent over $k$.
  By Theorem~\ref{thm:3-1} and Corollary~\ref{cor:3-1}, it suffices to establish \eqref{eq:1-10}, namely that there is a $|\delta|>0$, such that
  \begin{equation}
    \label{eq:3-19}
    ||u_1\theta_1 + u_2\theta_2 + \cdots + u_n\theta_n || \geq |\delta| (\max\limits_{1\leq j \leq n}|u_j|)^{-n}
  \end{equation}
for all integral points $u\neq 0$. If $u\in K^n$ and $u\neq 0$ is given, let $w= -[u_1\theta_1 + \cdots + u_n\theta_n]$. Then
\begin{equation}
  \label{eq:3-20}
  ||u_1\theta_1 + \cdots + u_n\theta_n|| = |w+u_1\theta_1 +\cdots+ u_n\theta_n|\leq \frac{1}{q}.
\end{equation}
Since $\theta_1, \theta_2, \ldots, \theta_n$ are algebraic over $k$, there is a polynomial $A\in K$, $A\neq 0$, such that $A\theta_1, A\theta_2, \ldots, A\theta_n$ are algebraic integral elements in $k_{\infty}$. Let
\begin{equation}
  \label{eq:3-21}
  \alpha = Aw + Au_1\theta_1 + \cdots + Au_n\theta_n.
\end{equation}
Then $\alpha$ is also an algebraic integral element in $k(\theta_1, \ldots, \theta_n)$ and $\alpha \neq 0$, because $\{1, \theta_1, \ldots, \theta_n\}$ are linearly independent over $k$. Let $m$ be the degree of $\alpha$ over $k$, and $m=p^t m_0$, where $m_0$ is the degree of separability of $\alpha$ and $p^t$ is the degree of inseparability of $\alpha$ (see Definition~2 of Page~67 of \cite{ref:nn4}), then we have $m=p^t m_0 \leq n+1$. If $m_0 = 1$, we have $|\alpha^{p^t}| \geq 1$, and then $|\alpha| \geq 1$, it follows by $|\alpha| = |A| \cdot ||u_1\theta_1 + \cdots + u_n\theta_n||$ that
\begin{equation*}
  ||u_1\theta_1 + \cdots + u_n\theta_n|| \geq |A|^{-1} \geq |A|^{-1} \left( \max_{1\leq j \leq n} |u_j| \right)^{-n},
\end{equation*}
and the theorem follows by taking $|\delta| = |A|^{-1}$. If $m_0 > 1$, letting $\alpha'$ denote any one of the $m_0$ (non-identity) algebraic conjugates of $\alpha$, we have
\begin{equation}
  \label{eq:3-22}
  \alpha' = Aw + Au_1\theta_1' + Au_2\theta_2' + \cdots + Au_n \theta_n',
\end{equation}
where $\theta_j'$ is the algebraic conjugate of $\theta_j$. It is easy to see that
\begin{equation*}
  |\alpha'| \leq \max\{|\alpha|, |\alpha - \alpha'|\}
\end{equation*}
and
\begin{equation}
  \label{eq:3-23}
  \begin{split}
      |\alpha' - \alpha| &= |A\sum_{i=1}^n u_i(\theta_i - \theta_i')|\\
& \leq |A| \max\limits_{1\leq i \leq n} |u_i|~|\theta_i - \theta_i'|\\
& \leq |\delta_1| \max\limits_{1\leq i \leq n}|u_i|,
  \end{split}
\end{equation}
where $|\delta_1|: = |A|\max|\theta_i - \theta_i'|$ is a non-zero value independent of $u$. By \eqref{eq:3-20} and \eqref{eq:3-21}, we have $|\alpha| \leq \frac{1}{q}|A|$, and then
\begin{equation}
  \label{eq:3-24}
  |\alpha'| \leq |\delta_2|\max\limits_{1\leq i \leq n} |u_i|,
\end{equation}
where $|\delta_2|$ is independent of $u$ and $|\delta_2| \geq 1$. Since the multiplication of $\alpha$ and its conjugates is a non-zero polynomial, then
\begin{equation*}
%  |\alpha|\cdot |\delta_2|^n (\max\limits_{1\leq i \leq n}|u_i|)^n \geq 1.
|\alpha|^{p^t} |\delta_2|^{p^t(m_0-1)} \left( \max_{1\leq j \leq n} |u_j|  \right)^{p^t(m_0-1)} \geq 1,
\end{equation*}
it follows that
\begin{equation*}
  |\alpha| ~|\delta_2|^{m_0-1} \left( \max_{1\leq j \leq n} |u_j|  \right)^{m_0-1} \geq 1.
\end{equation*}
We note that $|\alpha| = |A|~||u_1\theta_1 + u_2\theta_2 + \cdots + u_n\theta_n||$, then
\begin{equation}
  \label{eq:3-25}
  \begin{split}
    ||u_1\theta_1  + \cdots + u_n\theta_n|| &\geq |A|^{-1} |\delta_2|^{-(m_0-1)}  \left( \max_{1\leq i \leq n} |u_i|  \right)^{-(m_0-1)} \\
&\geq |A|^{-1}|\delta_2|^{-n}(\max\limits_{1\leq i \leq n}|u_i|)^{-n}.
  \end{split}
\end{equation}
Let $|\delta| = |A|^{-1}|\delta_2|^{-n}$. Then we have \eqref{eq:3-19} for all integral points $u\neq 0$, and Theorem~\ref{thm:1-2} follows immediately.
\end{proof}

\section{Binary Quadratic Forms}
\label{sec:binary}

The case of binary quadratic forms over $k_{\infty}$ is similar to the classical case (see Chapter 2 of \cite{ref:6}). However, there still exist some key differences. For example, the compactness lemma (see page 21 of \cite{ref:6}) plays an essential role in  discussion of the classical case, but this result is immediate in the positive characteristic case, since the finiteness of $\fnum_q$ is the natural compactness. To exhibit these differences, we give a detailed discussion here. Throughout this section we shall assume $p>2$, and put
\begin{equation}
  \label{eq:4-1}
  f(x, y) = \alpha x^2 + \beta xy + \gamma y^2, \quad \alpha, \beta, \gamma \in k_{\infty},
\end{equation}
a binary quadratic form of discriminant
\begin{equation*}
  \delta = \beta^2 - 4\alpha \gamma \neq \mbox{perfect square in }k.
\end{equation*}
Let $\theta$ and $\phi$ be the roots of $f(x, 1)=0$, so we have the following factorization into linear factors,
\begin{equation}
  \label{eq:4-2}
  f(x, y) = \alpha L(x, y)M(x, y),
\end{equation}
where
\begin{equation}
  \label{eq:4-3}
  L(x, y) = x-\theta y, \mbox{ and } M(x, y)= x-\phi y
\end{equation}
and
\begin{equation}
  \label{eq:4-4}
  |\alpha (\theta - \phi)| = |\delta|^{\frac{1}{2}}.
\end{equation}
We note that if $\theta\in k_{\infty}$, then $\delta^{\frac{1}{2}}\in k_{\infty}$ and $\phi\in k_{\infty}$.

\begin{lemma}
  \label{lem:4-1} If $a, b \in K$, $(a, b)=1$, and $f(a, b) = \alpha' \neq 0$, then there are two polynomials $c$ and $d$, such that $ad-bc=1$, and
  \begin{equation*}
    f(ax+cy, bx+dy) = \alpha' x^2 + \beta' xy + \gamma' y^2
  \end{equation*}
with $|\beta'| \leq \frac{1}{q}|\alpha'|$.
\end{lemma}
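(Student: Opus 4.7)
The plan is to use unimodular substitutions parametrized by a single polynomial, then kill the middle coefficient modulo $\alpha'$ by subtracting off the integer part.

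First, since $(a,b)=1$ in the Euclidean ring $K$, Bezout yields polynomials $c_0,d_0\in K$ with $ad_0-bc_0=1$. The general solution of $ad-bc=1$ in $K$ is then $c=c_0+ta$, $d=d_0+tb$ with $t\in K$ arbitrary. For any such pair the substitution is unimodular, hence $f(ax+cy,bx+dy)=\alpha' x^2+\beta' xy+\gamma' y^2$ for some $\gamma'\in k_\infty$, the leading coefficient automatically being $f(a,b)=\alpha'$.

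Next I would compute $\beta'$ explicitly. Expanding $f(ax+cy,bx+dy)$ and reading off the coefficient of $xy$ gives
\begin{equation*}
\beta'=2\alpha ac+\beta(ad+bc)+2\gamma bd.
\end{equation*}
Substituting $c=c_0+ta$ and $d=d_0+tb$ and collecting the $t$-terms, the bracket multiplying $t$ is exactly $2\alpha a^2+2\beta ab+2\gamma b^2=2f(a,b)=2\alpha'$. Hence
\begin{equation*}
\beta'=\beta_0'+2t\alpha',\qquad \beta_0':=2\alpha ac_0+\beta(ad_0+bc_0)+2\gamma bd_0.
\end{equation*}

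Finally, since $p>2$ we have $|2|=1$ and $2\alpha'\neq 0$, so $\beta_0'/(2\alpha')\in k_\infty$ is well defined. Choose $t:=-[\beta_0'/(2\alpha')]\in K$, the integral part defined in \eqref{eq:1-2}. Then $\beta'/(2\alpha')=\beta_0'/(2\alpha')-[\beta_0'/(2\alpha')]$, whose absolute value is $\|\beta_0'/(2\alpha')\|\leq 1/q$ by the basic property of $\|\cdot\|$ recorded in \eqref{eq:1-5}. Multiplying through by $|2\alpha'|=|\alpha'|$ gives $|\beta'|\leq \tfrac{1}{q}|\alpha'|$, as required.

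There is essentially no obstacle here: the only nontrivial ingredients are Bezout in $K$ and the $\|\cdot\|\leq 1/q$ bound for the integral-part decomposition, both already established. The single point of care is that $p>2$ is used to ensure $2$ is a unit in $k_\infty$, so that dividing by $2\alpha'$ is legitimate and the $t$-coefficient is indeed $2\alpha'$ rather than $0$.
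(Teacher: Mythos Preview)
Your proof is correct and follows essentially the same route as the paper: both complete $(a,b)$ to a unimodular matrix, parametrize the one-parameter family of such completions, and then kill the middle coefficient modulo $\alpha'$ by subtracting the integral part. The only cosmetic difference is that the paper writes the free parameter as $2^{-1}A$ with $A=-[\beta''/\alpha']$ (so that $\beta'=\beta''+A\alpha'$ directly), whereas you keep the factor of $2$ and take $t=-[\beta_0'/(2\alpha')]$; since $|2|=1$ under the standing hypothesis $p>2$, the two reductions yield the same bound.
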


\begin{proof}
  Since $a, b \in K$ and $(a, b) =1$, clearly there are polynomials $c$ and $d$ such that $ad - bc =1$. Let
  \begin{equation*}
    f(ax+cy, bx+dy) = \alpha' x^2 + \beta'' xy + \gamma'' y^2.
  \end{equation*}
It is easy to see that there is an $A\in K$ such that
\begin{equation}
  \label{eq:4-5}
  |\beta'' + A\alpha'|\leq \frac{1}{q}|\alpha'|.
\end{equation}
In fact, one may choose $A = -[\frac{\beta''}{\alpha'}]$, the integral part of $\frac{\beta''}{\alpha'}$. Let $c_1 = c + 2^{-1}Aa$, and $d_1 = d + 2^{-1}Ab$. Then
\begin{equation*}
  ad_1 - bc_1 = ad-bc = 1.
\end{equation*}
Suppose that
\begin{equation*}
      f(ax+c_1y, bx+d_1y) = \alpha' x^2 + \beta' xy + \gamma' y^2,
\end{equation*}
where by \eqref{eq:4-5}
\begin{equation*}
  \beta' = \beta'' + A \alpha', \mbox{ and then }  |\beta'|\leq \frac{1}{q}|\alpha'|.
\end{equation*}
Hence, $c_1$ and $d_1$ are as required, and we complete the proof of the lemma.
\end{proof}

The next lemma is an analogue of the compactness lemma.

\begin{lemma}
  \label{lem:4-2} Let
  \begin{equation*}
      f_n(x, y) = \alpha_n x^2 + \beta_n xy + \gamma_n y^2, \quad (1\leq n \leq \infty),
  \end{equation*}
where $\alpha_n \in K$, $\beta_n \in K$, and $\gamma_n \in K$. Suppose that $\beta_n^2 - 4\alpha_n \gamma_n = \delta$ for all $n\geq 1$ and
\begin{equation}
  \label{eq:4-6}
  0 < |D_1| \leq |\alpha_n| \leq |D_2|, \quad |\beta_n| \leq |D_3|~|\alpha_n|
\end{equation}
for all sufficiently large $n$, where $D_1, D_2, D_3\in K$ are independent of $n$. Then there exist values $\alpha,\beta, \gamma \in K$ with $\beta^2 - 4\alpha\gamma = \delta$, and a subsequence $f_{j_m}(x, y)$ of the $f_n(x,y)$, such that
\begin{equation}
  \label{eq:4-7}
  f_{j_m}(x, y) = \alpha x^2 + \beta xy + \gamma y^2
\end{equation}
for all $j_m$.
\end{lemma}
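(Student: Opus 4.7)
The plan is to exploit the fact that, in contrast to the real/complex case, a bounded set of polynomials over $\mathbb{F}_q$ is automatically \emph{finite}. This is exactly the point the author flags in the preceding paragraph when he writes that ``the finiteness of $\fnum_q$ is the natural compactness.'' So the proof should be pure pigeonhole rather than a genuine limiting argument.

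First I would observe that since $\alpha_n\in K$ with $|\alpha_n|\le |D_2|$, each $\alpha_n$ is a polynomial in $T$ of degree at most $\deg D_2$. The number of such polynomials is bounded by $q^{\deg D_2+1}$, hence finite. By the pigeonhole principle, an infinite subsequence of the $\alpha_n$ takes a common value, say $\alpha$. Passing to this subsequence, note that $|\beta_n|\le |D_3||\alpha_n|\le |D_2D_3|$, so the $\beta_n$ also range over a finite subset of $K$; applying pigeonhole again, I can refine to a further subsequence on which $\beta_n$ takes a common value $\beta$. On this doubly-refined subsequence $f_{j_m}(x,y) = \alpha x^2 + \beta xy + \gamma_{j_m} y^2$, and the discriminant relation
\begin{equation*}
\beta^2 - 4\alpha\,\gamma_{j_m} = \delta
\end{equation*}
forces $\gamma_{j_m} = (\beta^2 - \delta)/(4\alpha)$ (recall $p>2$ and $\alpha\ne 0$ since $|\alpha|\ge |D_1|>0$), which is a fixed element $\gamma$, automatically in $K$ because the $\gamma_{j_m}$ were. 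The resulting $\alpha,\beta,\gamma$ then satisfy $\beta^2 - 4\alpha\gamma = \delta$ by construction.

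There is essentially no obstacle: the only thing to be mildly careful about is that $\gamma$ indeed lies in $K$, which is immediate from it equalling any one of the $\gamma_{j_m}\in K$. One could phrase things so as to apply pigeonhole to $\gamma_n$ directly, using $|\gamma_n| = |\beta_n^2 - \delta|/(|4\alpha_n|) \le \max(|D_3|^2|D_2|^2,|\delta|)/|D_1|$, but invoking the discriminant identity is cleaner. This is the standard trick replacing the Bolzano–Weierstrass style compactness argument of the real case, and the whole proof fits in a few lines.
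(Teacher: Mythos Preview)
Your proposal is correct and follows essentially the same approach as the paper: both exploit the fact that bounded subsets of $K=\fnum_q[T]$ are finite, so the triples $(\alpha_n,\beta_n,\gamma_n)$ range over a finite set and pigeonhole gives a constant subsequence. The only cosmetic difference is that the paper bounds $\gamma_n$ directly (exactly the alternative you mention at the end) rather than solving for it from the discriminant relation.
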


\begin{proof}
  The hypotheses imply, for large enough $n$, that
  \begin{equation}
    \label{eq:4-8}
    |\beta_n|\leq |D_4|, \mbox{ and } |\gamma_n|\leq |D_5|,
  \end{equation}
for some $D_4, D_5$ in $K$,  independent of $n$. Since $\alpha_n \in K$, $\beta_n \in K$, $\gamma_n \in K$, and the degrees of $\alpha_n$, $\beta_n$, $\gamma_n$ are bounded, the sequence $f_n(x,y)$ runs through a finite set of distinct quadratic forms, and so the assertion of the lemma follows immediately.
\end{proof}

\begin{lemma}
  \label{lem:4-3} Let $f(x, y) = \alpha x^2 + \beta xy + \gamma y^2 $, where $\alpha, \beta, \gamma\in K$, $\delta = \beta^2 - 4\alpha \gamma$ be its discriminant, not a perfect square, and $\theta$, $\phi$ be the roots of $f(x, 1)=0$. Then there is a value $\eta \in k_{\infty}$ with $0 < |\eta| < 1$, and polynomials $a, b, c, d$ with $ad-bc=1$, such that
  \begin{equation}
    \label{eq:4-9}
    L(ax+by, cx+dy) = \eta L(x, y),
  \end{equation}
and
\begin{equation}
  \label{eq:4-10}
  M(ax+by, cx+dy) = \eta^{-1}M(x, y).
\end{equation}
\end{lemma}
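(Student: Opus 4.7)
The plan is to obtain the required matrix as the $SL_2(K)$-automorphism of $f$ coming from a non-trivial norm-$1$ unit of the quadratic order $R := K[\omega]$, where $\omega = \alpha\theta$ (which has monic minimal polynomial $X^2 + \beta X + \alpha\gamma \in K[X]$, so $\omega$ is integral over $K$). The key algebraic observation is that if $\eta = u + v\omega \in R$ satisfies the norm equation $\eta\bar{\eta} = u^2 - \beta uv + \alpha\gamma v^2 = 1$, then the matrix
$$\begin{pmatrix} a & b \\ c & d \end{pmatrix} := \begin{pmatrix} u & v\gamma \\ -v\alpha & u - v\beta \end{pmatrix}$$
has entries in $K$, determinant $\eta\bar{\eta} = 1$, and satisfies $L(ax+by, cx+dy) = \eta L(x,y)$ together with $M(ax+by, cx+dy) = \bar{\eta}\, M(x,y) = \eta^{-1} M(x,y)$ (a direct verification using $\alpha(\theta+\phi) = -\beta$ and $\alpha^2\theta\phi = \alpha\gamma$). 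So it suffices to exhibit such an $\eta \in R$ with $0 < |\eta| < 1$.

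To produce $\eta$ I use a Pell-type argument. First apply Theorem~\ref{thm:2-1} to $\theta$ with $|H| \to \infty$ (or Corollary~\ref{cor:2-1}) to get infinitely many distinct pairs $(y_n, x_n) \in K^2$ with $\xi_n := y_n - \theta x_n \neq 0$ and $|\xi_n| \to 0$. Writing $f(y,x) = \alpha L(y,x) M(y,x)$ and $M(y_n, x_n) = \xi_n + (\theta-\phi)x_n$, the ultrametric inequality yields $|f(y_n, x_n)| \leq |\alpha|\,|\theta - \phi| = |\delta|^{1/2}$ for all $n$ large, while $f(y_n, x_n) \in K\setminus\{0\}$ since $\theta,\phi\notin k$. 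Thus $\{f(y_n, x_n)\}$ takes only finitely many values, and by pigeonhole a fixed $\alpha' \in K^*$ occurs for infinitely many $n$; a second pigeonhole on the finite set $(K/\alpha' K)^2$ then selects two distinct indices, say $1$ and $2$, with $(y_1, x_1) \equiv (y_2, x_2) \pmod{\alpha'}$, and since $|\xi_n|\to 0$ on this infinite subset I may further insist $|\xi_1| < |\xi_2|$.

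Set $\eta := \xi_1/\xi_2$, so $0 < |\eta| < 1$ and $\eta\bar{\eta} = (\xi_1\bar{\xi}_1)/(\xi_2\bar{\xi}_2) = (\alpha'/\alpha)/(\alpha'/\alpha) = 1$. Multiplying numerator and denominator by $\alpha\bar{\xi}_2$ and simplifying gives $\eta = u + v\omega$ with
$$u = \frac{\alpha y_1 y_2 + \beta x_2 y_1 + \gamma x_1 x_2}{\alpha'}, \quad v = \frac{x_2 y_1 - x_1 y_2}{\alpha'}.$$
The chosen congruences make both numerators divisible by $\alpha'$: that of $v$ is $\equiv x_1 y_1 - x_1 y_1 = 0 \pmod{\alpha'}$, and that of $u$ is $\equiv f(y_1, x_1) = \alpha' \equiv 0 \pmod{\alpha'}$. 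Hence $u, v \in K$ and $\eta \in R$ as required. The condition $|\eta| < 1$ rules out $v = 0$ (which would force $\eta$ to be a constant square-root of $1$ in $\mathbb{F}_q^*$, of absolute value $1$), so the resulting matrix is genuinely non-trivial.

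The main technical obstacle is precisely the integrality bookkeeping in the previous step. Finding a norm-$1$ element of $k(\theta)$ of small absolute value is cheap once one has two good rational approximations to $\theta$, but ensuring that the element actually lies in the order $K[\omega]$ (so that, via the parameterization through $\omega = \alpha\theta$, all four matrix entries land in $K$) requires the two approximations to be congruent modulo $\alpha'$; this is exactly what the second pigeonhole delivers. The rest is the classical Pell argument: good rational approximations to a quadratic irrational generate units in the corresponding order, with Theorem~\ref{thm:2-1} standing in for Dirichlet's theorem in the function-field setting.
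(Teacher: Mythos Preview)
Your proof is correct and takes a genuinely different route from the paper's. The paper argues via reduction theory of forms: it uses Lemma~\ref{lem:2-4} to produce integral points $A_n=(a_n,b_n)$ with $|L(A_n)|$ small and $|M(A_n)|$ controlled, invokes Lemma~\ref{lem:4-1} to normalize $f(X{\mathsf S}_n)$ so that $|\beta_n|\le q^{-1}|\alpha_n|$, and then applies the finiteness Lemma~\ref{lem:4-2} to extract a subsequence with $f(X{\mathsf S}_m)=g(X)$ constant; the desired automorphism is ${\mathsf T}={\mathsf S}_1^{-1}{\mathsf S}_m$, and $\eta$ is read off from the ratio of the scalars in $L(X{\mathsf S}_m)=\rho_m\lambda(X)$. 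Your argument instead realizes the automorphism directly as the action of a nontrivial norm-one unit of the order $K[\alpha\theta]$, manufactured by the classical Pell recipe: Dirichlet (Theorem~\ref{thm:2-1}) gives infinitely many good approximations, a first pigeonhole on the bounded values $f(y_n,x_n)\in K^{*}$ fixes $\alpha'$, and a second pigeonhole on $(K/\alpha'K)^2$ forces the congruence that makes $u,v\in K$. This bypasses Lemmas~\ref{lem:4-1} and \ref{lem:4-2} entirely and yields the matrix in closed form, making the link with units in the quadratic order transparent; the paper's approach, by contrast, stays inside the section's form-theoretic machinery and mirrors Cassels' treatment of the real case more closely. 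Both methods ultimately hinge on the same finiteness (boundedly many polynomials of bounded degree), used once as Lemma~\ref{lem:4-2} in the paper and twice as pigeonhole in your argument.
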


\begin{proof}
  Let ${\mathsf S} = \left( \begin{tabular}[c]{cc}
  $a$ & $c$ \\
  $b$ & $d$
  \end{tabular}
 \right)\in SL(2, K)$ be a $2\times 2$ matrix in $K$ with $\det({\mathsf S}) = 1$. We write $X = (x, y)$, $X{\mathsf S} = (ax+by, cx+dy)$.

For any positive integer $n$, we consider the following linear inequalities
\begin{equation}
  \label{eq:4-11}
  \left\{
    \begin{tabular}[l]{l}
      $v(x - \theta y )\geq n+1$, \\
      $v(x-\phi y ) \geq v(\theta - \phi) - n$.
    \end{tabular}
\right.
\end{equation}
By Lemma~\ref{lem:2-4}, there is an integral point $(x, y) = (a_n, b_n)\neq 0$, such that \eqref{eq:4-11} holds. If $a_n = 0$, then $b_n \neq 0$. By \eqref{eq:4-11}, we have $v(\theta b_n) \geq n+1$, and so $v(\theta b_n) > v(\theta)$ when $n$ is large enough. But $v(\theta b_n) = v(\theta) + v(b_n) \leq v(\theta)$ because of $b_n \in K$ and $b_n \neq 0$, which is a contradiction. Hence we have $a_n \neq 0$. Let $A_n = (a_n, b_n)$. Without loss of generality, we may suppose that the greatest common divisor of $a_n$ and $b_n$ is $1$, and $a_n \neq 0$ when $n$ is large enough. By \eqref{eq:4-11}, we have
\begin{equation}
  \label{eq:4-12}
  |L(A_n)|\leq q^{-n-1}, \quad |M(A_n)|\leq q^n|\theta - \phi|,
\end{equation}
and
\begin{equation}
  \label{eq:4-13}
  |f(A_n)| = |\alpha L(A_n)M(A_n)| \leq \frac{1}{q}|\alpha (\theta - \phi)|.
\end{equation}
By Lemma~\ref{lem:4-1}, there is a matrix ${\mathsf S}_n = \left( \begin{tabular}[c]{cc}
  $a_n$ & $c_n$ \\
  $b_n$ & $d_n$
  \end{tabular}
 \right)\in SL(2, K)$ such that
 \begin{equation*}
   f(X{\mathsf S_n}) = \alpha_n x^2 + \beta_n xy + \gamma_n y^2,
 \end{equation*}
where
\begin{equation}
  \label{eq:4-14}
  \alpha_n  = f(A_n), |\beta_n|\leq \frac{1}{q}|\alpha_n|, \mbox{ and } \beta_n^2 - 4\alpha_n \gamma_n = \delta.
\end{equation}
It is easy to see by \eqref{eq:4-13} that $1\leq |\alpha_n| \leq \frac{1}{q}|\alpha(\theta - \phi)|$. By Lemma~\ref{lem:4-2}, there is a subsequence ${\mathsf S}_m$ of ${\mathsf S}_n$, such that
\begin{equation}
  \label{eq:4-15}
  f(X{\mathsf S}_m) = g(x, y) = g(X),
\end{equation}
where $g(x, y)$ is independent of $m$. Let $g(X) = \lambda(X)u(X)$ be any factorization into linear forms. On the other hand,
\begin{equation*}
  g(X) = f(X{\mathsf S}_m) = \alpha L(X{\mathsf S}_m) M(X{\mathsf S}_m).
\end{equation*}
So, we have (by taking a subsequence again if necessary)
\begin{equation}
  \label{eq:4-16}
  L(X{\mathsf S}_m) = \rho_m \lambda(X), \mbox{ and } M(XS_m) = \pi_m U(X),
\end{equation}
where $\rho_m$ and $\pi_m$ are in $k_{\infty}$. But $A_m = (1, 0){\mathsf S}_m$ by construction. Hence on putting $X=(1, 0)$ in \eqref{eq:4-16}, and using \eqref{eq:4-12} we have
\begin{equation*}
  \rho_m / \rho_1 = L(A_m) /  L(A_1) \rightarrow 0, \quad  \text{as $m \to \infty$}.
\end{equation*}
Put $\eta = \rho_m / \rho_1$, so that $0< |\eta| < 1$ when $m$ is large enough. Let ${\mathsf T} = {\mathsf S}_1^{-1}{\mathsf S}_m$. Then
\begin{equation}
  \label{eq:4-17}
  f(X{\mathsf T}) = g(X{\mathsf S}_1^{-1}) = f(X).
\end{equation}
Similarly, \eqref{eq:4-16} gives
\begin{equation*}
  L(X{\mathsf T}) = \rho_m \lambda(X{\mathsf S}_1^{-1})  =\eta L(X)
\end{equation*}
and
\begin{equation*}
  M(X{\mathsf T}) = \eta^{-1} M(X).
\end{equation*}
This proves the lemma with ${\mathsf T} = \left( \begin{tabular}[c]{cc}
  $a$ & $c$ \\
  $b$ & $d$
  \end{tabular}
 \right)$.
\end{proof}

\begin{corollary}
  \label{cor:4-1} Let $x_0, y_0\in K$, and $n$ be an integer. Then there are polynomials $x_1, y_1 \in K$, such that $f(x_1, y_1) = f(x_0, y_0)$, and
  \begin{equation*}
    L(x_1, y_1) = \eta^n L(x_0, y_0), \quad M(x_1, y_1) = \eta^{-n}M(x_0, y_0),
  \end{equation*}
where $0 < |\eta| < 1$.
\end{corollary}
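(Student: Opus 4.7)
The plan is to iterate the matrix $\mathsf{T}$ produced by Lemma~\ref{lem:4-3}. Writing $\mathsf{T} = \left(\begin{tabular}[c]{cc} $a$ & $c$ \\ $b$ & $d$ \end{tabular}\right) \in SL(2, K)$, we have $\det(\mathsf{T}) = ad - bc = 1$, so $\mathsf{T}^{-1} = \left(\begin{tabular}[c]{cc} $d$ & $-c$ \\ $-b$ & $a$ \end{tabular}\right)$ also lies in $SL(2, K)$. Hence $\mathsf{T}^n$ has entries in $K$ for every integer $n \in \mathbb{Z}$, and so if $(x_0, y_0) \in K^2$ then $(x_1, y_1) := (x_0, y_0)\mathsf{T}^n$ also lies in $K^2$.

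The key step is to verify by induction on $|n|$ that
\begin{equation*}
L(X \mathsf{T}^n) = \eta^n L(X), \qquad M(X \mathsf{T}^n) = \eta^{-n} M(X)
\end{equation*}
for all $X = (x, y)$ and all $n \in \mathbb{Z}$. The case $n = 0$ is trivial, and $n = 1$ is exactly the content of \eqref{eq:4-9}--\eqref{eq:4-10}. For the inductive step from $n$ to $n+1$, apply Lemma~\ref{lem:4-3} to the point $X \mathsf{T}^n$ to obtain
\begin{equation*}
L(X\mathsf{T}^{n+1}) = L\bigl((X\mathsf{T}^n)\mathsf{T}\bigr) = \eta L(X\mathsf{T}^n) = \eta^{n+1} L(X),
\end{equation*}
and similarly for $M$. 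For negative indices, substitute $X \mapsto X\mathsf{T}^{-1}$ in \eqref{eq:4-9} to get $L(X) = \eta L(X\mathsf{T}^{-1})$, that is, $L(X\mathsf{T}^{-1}) = \eta^{-1} L(X)$, and then iterate as above.

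Setting $(x_1, y_1) = (x_0, y_0)\mathsf{T}^n$, we then obtain $L(x_1, y_1) = \eta^n L(x_0, y_0)$ and $M(x_1, y_1) = \eta^{-n} M(x_0, y_0)$, and consequently
\begin{equation*}
f(x_1, y_1) = \alpha L(x_1, y_1)M(x_1, y_1) = \alpha \eta^n \eta^{-n} L(x_0, y_0) M(x_0, y_0) = f(x_0, y_0),
\end{equation*}
which is the desired conclusion. There is no real obstacle here; the only point requiring care is that negative powers of $\mathsf{T}$ remain in $SL(2, K)$, which is automatic since $\det(\mathsf{T}) = 1$ forces $\mathsf{T}^{-1}$ to have polynomial entries.
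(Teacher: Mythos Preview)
Your proof is correct and follows essentially the same approach as the paper: define $(x_1,y_1)=(x_0,y_0)\mathsf{T}^n$ for the matrix $\mathsf{T}\in SL(2,K)$ supplied by Lemma~\ref{lem:4-3}, and iterate \eqref{eq:4-9}--\eqref{eq:4-10} (handling negative $n$ via the substitution $X\mapsto X\mathsf{T}^{-1}$). You add a little more detail than the paper does---making explicit that $\mathsf{T}^{-1}\in SL(2,K)$ and deriving $f(x_1,y_1)=f(x_0,y_0)$ from the factorization $f=\alpha LM$---but the argument is the same.
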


\begin{proof}
Suppose that ${\mathsf T} = {\mathsf S}_1^{-1}{\mathsf S}_m$ in the above proof. We have
  \begin{equation*}
    L(X{\mathsf T}^n) = \eta L(X{\mathsf T}^{n-1}) = \cdots = \eta^n L(X)
  \end{equation*}
and
\begin{equation*}
  M(X{\mathsf T}^n) = \eta^{-1}M(X{\mathsf T}^{n-1}) = \cdots = \eta^{-n} M(X).
\end{equation*}
On writing $X{\mathsf T}^{-1}$ for $X$ in $L(X{\mathsf T}) = \eta L(X)$, we have
\begin{equation*}
  L(X{\mathsf T}^{-1}) = \eta^{-1}L(X),
\end{equation*}
and so, similarly
\begin{equation*}
  L(X{\mathsf T}^n) = \eta^n L(X), \mbox{ and } M(X{\mathsf T}^n) = \eta^{-n} M(X), \mbox{ for } n<0.
\end{equation*}
The corollary follows on putting $(x_1, y_1) = (x_0, y_0){\mathsf T}^n$.
\end{proof}

Next, we give a proof of Theorem~\ref{thm:1-3}. In fact, we will prove a more general result. Recall the definition of the approximation constant $\tau(\theta)$,
\begin{equation}
  \label{eq:4-18}
  \tau(\theta) = \lim_{|Q|\rightarrow \infty} \inf\{|x|~||x\theta||: x\in K \mbox{ and } |x|\geq |Q| \}.
\end{equation}
By Corollary~\ref{cor:2-1}, we have $0\leq \tau(\theta)\leq \frac{1}{q}$. If $\theta$ is a quadratic algebraic element in $k_{\infty}$, then we have the following more accurate result for $\tau(\theta)$.

\begin{theorem}
  \label{thm:4-1} Suppose that $f(x, y) = \alpha x^2 + \beta xy + \gamma y^2$ is a binary quadratic form with $\alpha, \beta, \gamma \in k_{\infty}$ and $\delta = \beta^2 - 4\alpha \gamma$, not a perfect square in $k$. Let $\theta$ and $\phi$ be the roots of $f(x, 1)=0$, and let
  \begin{equation}
    \label{eq:4-19}
    \sigma = \sigma(f) = \inf\{|f(x, y)|: (x, y)\in K^2, (x, y)\neq 0  \}.
  \end{equation}
Then we have
\begin{enumerate}
\item[$(i)$] $\tau(\theta)\geq |\delta|^{-\frac{1}{2}}\sigma$, for any $\alpha$, $\beta$, $\gamma$.
\item[$(ii)$] If $\alpha, \beta, \gamma \in k$ are rational functions, then $\sigma(f)$ is attained by some $(x, y)\in K^2$, $(x, y)\neq 0$, and $\tau(\theta) = |\delta|^{-\frac{1}{2}}\sigma$.
\item[$(iii)$] If $\alpha, \beta, \gamma \in k$, then there are infinitely many $Q\in K$, such that
  \begin{equation}
    \label{eq:4-20}
    |Q|~||Q\theta|| = \tau(\theta).
  \end{equation}
\end{enumerate}
\end{theorem}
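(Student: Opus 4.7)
The plan is to handle the three claims in order, with $(ii)$ and $(iii)$ sharing a single construction. For part $(i)$, given a nonzero $Q\in K$, I would set $y_0 := [Q\theta]\in K$; then $(y_0, Q)$ is a nonzero integral point of $K^2$, so $|f(y_0, Q)|\geq \sigma$. Factoring through \eqref{eq:4-2}, one has $|L(y_0, Q)| = |y_0 - \theta Q| = ||Q\theta||$ by definition of the nearest-polynomial map, while $M(y_0, Q) = y_0 - \phi Q = (y_0 - \theta Q) + (\theta - \phi)Q$; since $||Q\theta||\leq 1/q$ whereas $|\theta - \phi|\cdot|Q|$ grows, the ultrametric inequality gives $|M(y_0, Q)| = |\theta - \phi|\cdot|Q|$ for all large $|Q|$. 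Multiplying and applying \eqref{eq:4-4} yields $|f(y_0, Q)| = |\delta|^{1/2}\cdot|Q|\cdot||Q\theta||$, hence $|Q|\cdot||Q\theta| | \geq |\delta|^{-1/2}\sigma$. Passing to $\liminf$ as $|Q|\to\infty$ proves $(i)$.

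For $(ii)$ I would first reduce to $\alpha,\beta,\gamma\in K$ by clearing a common denominator $D\in K$: this scales both $\sigma$ and $|\delta|^{1/2}$ by $|D|$, leaving the ratio $|\delta|^{-1/2}\sigma$ invariant, while $\tau(\theta)$ depends only on $\theta$. In this reduced setting $f$ takes values in $K$ on integral points, so $|f(x, y)|$ ranges in the discrete set $\{0\}\cup\{q^n : n\geq 0\}$. Because $\delta$ is not a square in $k$, the factorization $f(x,y) = \alpha(x-\theta y)(x-\phi y)$ rules out a nontrivial zero of $f$ (which would force $\theta\in k$ or $\phi\in k$), so $\sigma>0$ is actually attained at some nonzero $(x_0, y_0)\in K^2$.

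The matching upper bound $\tau(\theta)\leq |\delta|^{-1/2}\sigma$ and the infinitely many extremal $Q$ both come out of a single application of Corollary~\ref{cor:4-1}. Using it with the minimizer $(x_0, y_0)$, one obtains for each integer $n\geq 0$ integral polynomials $(x_n, y_n)$ with $f(x_n, y_n) = \sigma$, $L(x_n, y_n) = \eta^n L(x_0, y_0)$, and $M(x_n, y_n) = \eta^{-n}M(x_0, y_0)$, where $0<|\eta|<1$ by Lemma~\ref{lem:4-3}; both $L(x_0, y_0)$ and $M(x_0, y_0)$ are nonzero since neither $\theta$ nor $\phi$ lies in $k$. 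For $n$ large enough that $|\eta|^n|L(x_0, y_0)|\leq 1/q$, the bound $v(x_n - y_n\theta)\geq 1$ forces $x_n = [y_n\theta]$, so $||y_n\theta|| = |\eta|^n|L(x_0, y_0)|$. Writing $(\theta - \phi)y_n = M(x_n, y_n) - L(x_n, y_n)$ and noting that the $\eta^{-n}$ term dominates for large $n$, one finds $|y_n| = |\theta - \phi|^{-1}|\eta|^{-n}|M(x_0, y_0)|$, and the $\eta^{\pm n}$ cancel after multiplication:
$$|y_n|\cdot||y_n\theta|| = \frac{|L(x_0, y_0)\, M(x_0, y_0)|}{|\theta - \phi|} = \frac{|f(x_0, y_0)|}{|\alpha(\theta - \phi)|} = |\delta|^{-1/2}\sigma.$$
Since $|y_n|\to\infty$, this produces infinitely many witnesses $Q = y_n$, proving $(iii)$ and the reverse inequality for $(ii)$ at once. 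The main delicate point will be tracking the non-archimedean magnitudes carefully enough to justify both $x_n = [y_n\theta]$ and the dominance of $\eta^{-n}M(x_0, y_0)$ over $\eta^n L(x_0, y_0)$ for large $n$; once $|\eta|<1$ is in hand, both reduce to taking $n$ sufficiently large, but this is where the strict inequality in Lemma~\ref{lem:4-3} is indispensable.
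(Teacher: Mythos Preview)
Your proposal is correct and follows essentially the same route as the paper: part $(i)$ is handled via the factorization $f = \alpha L\cdot M$ and the ultrametric estimate $|M(P,Q)| = |\theta-\phi|\,|Q|$ for large $|Q|$ (the paper packages this as the identity \eqref{eq:4-21} and argues via an auxiliary $\tau'>\tau(\theta)$, but the content is identical), and parts $(ii)$--$(iii)$ both rest on the automorphism furnished by Corollary~\ref{cor:4-1} applied to a minimizer $(x_0,y_0)$, exactly as in the paper. Your write-up is in fact somewhat more explicit than the paper's in tracking the sizes $|L(x_n,y_n)|$, $|M(x_n,y_n)|$, and $|y_n|$ separately; the only cosmetic slip is writing ``$f(x_n,y_n)=\sigma$'' where $|f(x_n,y_n)|=\sigma$ is meant.
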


\begin{proof}
  We have the following identity
  \begin{equation}\label{eq:4-21}
    \begin{split}
    f(P, Q) &= \alpha (P-\theta Q)(P-\phi Q)  \\
            &= \alpha (\theta - \phi)Q(P-\theta Q) + \alpha(P-\theta Q)^2,
    \end{split}
  \end{equation}
where $(P, Q)\in K^2$. Suppose that $\tau'$ is any number $>\tau(\theta)$. Then there are infinitely many $(P, Q)\in K^2$, such that $|Q|~|Q\theta - P|<\tau'$ with arbitrarily large $|Q|$. Thus, we may assume that $|Q|^{-2}<|\theta - \phi|$, and so
\begin{equation*}
  \begin{split}
    |f(P, Q)| &\leq \max\{|\alpha(\theta-\phi)|\tau', |\alpha|\tau' |Q|^{-2}  \}\\
              &\leq |\alpha(\theta-\phi)|\tau' = |\delta|^{\frac{1}{2}}\tau'.
  \end{split}
\end{equation*}
By the definition of $\sigma$, it follows that
\begin{equation*}
  \sigma \leq |f(P, Q)| \leq |\delta|^{\frac{1}{2}}\tau', \mbox{ and } |\delta|^{-\frac{1}{2}}\sigma \leq \tau'.
\end{equation*}
Therefore, we have $\tau(\theta)\geq |\delta|^{-\frac{1}{2}}\sigma$, which is the assertion of $(i)$.

To prove $(ii)$, if $\alpha, \beta, \gamma \in k$ are rational functions, then there is a polynomial $H$ such that $Hf(x, y)\in K$ for all $(x, y)\in K$. If $(x, y)\in K^2$ and $(x, y)\neq 0$, then $f(x, y)\neq 0$, and then we have $v(Hf(x, y))\leq 0$. Since $v(Hf(x, y))$ takes on integer values,  there is a point $(x_0, y_0)\in K^2$, such that $v(Hf(x_0, y_0)) = \max\{ v(Hf(x, y))\}$. Therefore, $\sigma = |f(x_0, y_0)|$. This proves that $\sigma$ is attained by $|f(x_0, y_0)|$.

To prove  $\tau(\theta) = |\delta|^{-\frac{1}{2}}\sigma$, by Corollary~\ref{cor:4-1} there is an integral point $(P, Q)\in K^2$, such that
\begin{equation*}
  \sigma = |f(x_0, y_0)| = |f(P, Q)|,
\end{equation*}
and $|Q\theta - P|$ is arbitrarily small. It follows by \eqref{eq:4-21} that
\begin{equation}
  \label{eq:4-22}
  \sigma = |f(P, Q)| = |\alpha(\theta - \beta)|~|Q|~|P-Q\theta| \geq  |\delta|^{-\frac{1}{2}}\tau(\theta).
\end{equation}
Hence we have $|\delta|^{-\frac{1}{2}}\sigma \leq \tau(\theta) \leq |\delta|^{-\frac{1}{2}}\sigma$, and so $\tau(\theta) = |\delta|^{-\frac{1}{2}}\sigma$.

\eqref{eq:4-22} also implies the assertion of $(iii)$, since there are infinitely many $(P, Q)\in K^2$, such that
\begin{equation*}
  \sigma = |\alpha(\theta - \phi)|~|Q|~|P-Q\theta|.
\end{equation*}
If follows that
\begin{equation*}
  |Q|~||Q\theta|| \leq |Q|~|P-Q\theta| = |\delta|^{-\frac{1}{2}}\sigma = \tau(\theta) \leq |Q|~||Q\theta||.
\end{equation*}
This completes the proof of Theorem~\ref{thm:4-1}.
\end{proof}

It is easy to see that Theorem~\ref{thm:1-3} follows from parts $(ii)$ and $(iii)$ of the above theorem.

\section{Examples}
\label{sec:exp}

For any $\alpha \in k_{\infty}$, we know that the approximation constant $\tau(\alpha)$ lies in the interval of $0\leq \tau(\alpha)\leq \frac{1}{q}$. In this section, we give two examples involving quadratic algebraic elements. The first example is about a quadratic element whose approximation constant is $\tau(\alpha) = \frac{1}{q^d}$. Of course, this example is well known by using continued fractions as we demonstrate below, but in fact it is a direct consequence of Theorem~\ref{thm:4-1} above. The second example is about the periodic property of continued fractions. Throughout this section, we let $p>2$.

 Example 1. Let $d(T)$ be a polynomial of degree $d$ and $\alpha_1$, $\alpha_2$ be the roots of the  quadratic equation
\begin{equation}
  \label{eq:5-1}
  x^2 + d(T)x-1=0.
\end{equation}
Let $f(x, y)=x^2+d(T)xy - y^2$ be the associated binary quadratic form. Since its discriminant $\delta = d^2(T)+4$, is not a perfect square, and $|\delta| = q^{2d}$, it is easy to verify that the minimal value of $|f(x, y)|$ is attained by $|f(1, 0)|=1$. By Theorem~\ref{thm:4-1}, we have $\tau(\alpha_1) = \tau(\alpha_2) = |\delta|^{-\frac{1}{2}}\sigma = q^{-d}$. In fact, this conclusion also follows by making use of continued fractions as we now demonstrate. We have
\begin{equation*}
  \alpha_i = \frac{1}{2}(-d(T)\pm \sqrt{d^2(T)+4}), \quad (i=1, 2).
\end{equation*}
We first show $\alpha_1$ and $\alpha_2$ have the following continued fraction expansion that
\begin{equation}
  \label{eq:5-2}
  \alpha_1 = \frac{1}{2}(\sqrt{d^2(T)+4}-d(T)) = [0, d(T), d(T), \ldots, d(T), \ldots]
\end{equation}
and
\begin{equation}
  \label{eq:5-3}
  \alpha_2 = \frac{1}{2}(-\sqrt{d^2(T)+4}-d(T)) = -[d(T), d(T), \ldots, d(T), \ldots].
\end{equation}

To prove \eqref{eq:5-2} and \eqref{eq:5-3}, let
\begin{equation}
  \label{eq:5-4}
  \theta = [0, d(T), \ldots, d(T), \ldots].
\end{equation}
We have $[\theta] = 0$, and $\frac{1}{\theta} = d(T)+\theta$, so $\theta$ is a root of \eqref{eq:5-1}. We only show that $\theta = \alpha_1$. Suppose that $x\in k_{\infty}$ with $v(x)\geq 1$. Then $\sqrt{1+x}$ has the following power series expansion
\begin{equation}
  \label{eq:5-5}
  \sqrt{1+x} = 1+\frac{1}{2}x - \frac{1}{8}x^2 + \cdots.
\end{equation}
The series on the right-hand side of \eqref{eq:5-5} is convergent since $v(x)\geq 1$ (see Proposition~2.2 of \cite{ref:8}). We replace $x$ by $4d^{-2}(T)$, to get
\begin{equation*}
  \sqrt{1+\frac{4}{d^2(T)}} = 1 + \frac{2}{d^2(T)} - \frac{2}{d^4(T)} + \cdots .
\end{equation*}
It follows that
\begin{equation*}
  \sqrt{d^2(T)+4} = d(T)+\frac{2}{d(T)} - \frac{2}{d^3(T)}+\cdots .
\end{equation*}
Hence we have $[\sqrt{d^2(T)+4}] = d(T)$. If $\theta\neq \alpha_1$, then $\theta=\alpha_2$, and so we have $2\theta + d(T) = -\sqrt{d^2(T)+4}$, and
\begin{equation}
  \label{eq:5-6}
  [2\theta + d(T)] = [-\sqrt{d^2(T)+4}]  = -d(T).
\end{equation}
On the other hand, $[2\theta + d(T)] = 2[\theta]+ d(T) = d(T)$, which is a contradiction. Therefore we have $\theta = \frac{1}{2}(\sqrt{d^2(T)+4} - d(T))$, and \eqref{eq:5-3} follows because of $\alpha_2 = -\frac{1}{\alpha_1}$.

Suppose now that $\alpha = [a_0, a_1, a_2, \ldots, a_n, \ldots]$ is the continued fraction expansion of an element $\alpha$, where $a_0 = [\alpha]$, and $a_j \in K$, with $\deg a_j\geq 1~(j \geq 1)$. We let
\begin{equation}
  \label{eq:5-7}
  \frac{P_n}{Q_n} = [a_0, a_1, \ldots, a_n], \mbox{ and } A_n = [a_n, a_{n+1}, \ldots],
\end{equation}
where $\frac{P_n}{Q_n}$ is called a convergent of $\alpha$, $a_j$ is called a partial quotient of $\alpha$, and $A_n$ is called a complete quotient of $\alpha$. It is known that (see \cite{ref:9})
\begin{equation}
  \label{eq:5-8}
  \left|\alpha - \frac{P_n}{Q_n}\right|  = \frac{1}{|a_{n+1}|~|Q_n|^2}, \mbox{ and } \left|\alpha - \frac{P}{Q}\right|>\left|\alpha - \frac{P_n}{Q_n}\right|
\end{equation}
for all $P, Q\in K$ with $1\leq |Q| < |Q_n|$. It follows that
\begin{equation}
  \label{eq:5-9}
  \tau(\alpha) = \lim_{n\rightarrow \infty} \inf\left\{ \frac{1}{|a_i|}: i\geq n\right\}.
\end{equation}
By \eqref{eq:5-2} and \eqref{eq:5-3}, we have $\tau(\alpha_1) = \tau(\alpha_2) = q^{-d}$ at once.

We consider the special case where $d(T) = aT+b$, with $a\in \fnum_q^*$ and $b\in \fnum_q$. We denote $\alpha(a, b) = \frac{1}{2}(\sqrt{(aT+b)^2+4} - (aT+b))$. By \eqref{eq:5-2}, we have
\begin{equation}
  \label{eq:5-10}
  \alpha(a, b) = [0, aT+b, \ldots, aT+b, \ldots],
\end{equation}
and $\tau(\alpha(a, b)) = \frac{1}{q}$, so $\alpha(a, b)$ behaves like an analogue of the real number $\frac{1}{2}(\sqrt{5}-1)$. But in the real number case, a real number $\alpha$ has approximation constant $\tau(\alpha) = \frac{1}{\sqrt{5}}$, if and only if $\alpha$ is equivalent to $\frac{1}{2}(\sqrt{5}-1)$. On the function field side, clearly, we may find infinitely many $\alpha\in k_{\infty}$, such that $\tau(\alpha) = \frac{1}{q}$, with $\alpha$ not equivalent to $\alpha(a, b)$. For more about the equivalence in positive characteristic, the reader is referred to \cite{ref:5,ref:13}. This observation suggests that the classical Markov chain does not exist in function fields. For more precise explanation, we refer the readers to \cite[Remark~9.2.1]{ref:n3}.

 Example 2. It is an important question whether the partial quotients in the continued fraction expansion for $\alpha$ are bounded. In such a case, $\alpha$ is said to be badly approximable. In \cite{ref:4,ref:5}, Baum and Sweet showed a famous example in $\fnum_2((\frac{1}{T}))$, with $\alpha$ being the unique solution of the equation $Tx^3+x+T=0$. The partial quotients of the continued fraction expansion of $\alpha$ are all of degree bounded by $2$. In this example, we take a look at quadratic algebraic elements in $k_{\infty}~(p>2)$.  It is known (see \cite{ref:9}, Theorem~3.1) that the sequence of partial quotients in the continued fraction expansion of an $\alpha\in k_{\infty}$ is ultimately periodic, if and only if $\alpha$ is a quadratic algebraic element over $k$. For such $\alpha$ we write
\begin{equation}
  \label{eq:5-11}
  \alpha = [a_0, a_1, \ldots, a_n, \overline{b_1, b_2, \ldots, b_m}],
\end{equation}
and define $D(\alpha)$ by
\begin{equation}
  \label{eq:5-12}
  D(\alpha) := \max\limits_{1\leq i \leq m}\deg(b_i).
\end{equation}
Since the approximation constant of $\alpha$ is $\tau(\alpha) = q^{-D(\alpha)}$, by Theorem~\ref{thm:4-1}, we have
\begin{equation}
  \label{eq:5-13}
  D(\alpha) = \frac{1}{2}\deg \delta + t(f),
\end{equation}
where $f(x, y) = ax^2 + bxy + cy^2$, $a, b, c\in K$, $\delta = b^2-4ac$, $\alpha$ is a root of $f(x, 1)=0$, and
\begin{equation*}
  t(f) := \max\{v(f(x, y)): (x, y)\in K^2, (x, y)\neq 0\}.
\end{equation*}
Formula \eqref{eq:5-13} shows that the largest degree of the partial quotients only depends on the discriminant $\delta$, and the maximum value of $v(f(x, y))$. In particular, we have $D(\alpha)\leq \frac{1}{2}\deg \delta$, and $D(\alpha) = \frac{1}{2}\deg \delta$ if $\alpha$ is a quadratic algebraic integral element over $k$.

~\\
\noindent Acknowledgements. I would like to thank Professor Jacques Peyriere for a valuable discussion about the Haar integral formula for a linear change of variables. In fact, the proof of Lemma~2.2 given here was suggested in an  email letter he sent to me. I would like to thank Professor Todd Cochrane for his careful reading throughout the manuscript and correcting some of the mistakes in English. I also would like to thank the referees for their valuable comments.

\end{document}